\newtheorem{theorem}{Theorem}[section]
\newtheorem{corollary}[theorem]{Corollary}
\newtheorem{proposition}[theorem]{Proposition}
\newtheorem{definition}[theorem]{Definition}
\newtheorem{remark}[theorem]{Remark}
\numberwithin{equation}{section}
\begin{document}
\title{An Invariant of Algebraic Curves from the Pascal Theorem\thanks{The project is supported by NNSFC(Nos. 10771028, 60533060),
Program of New Century Excellent Fellowship of NECC, and is
partially funded by a DoD fund (DAAD19-03-1-0375).}}
\author{Zhongxuan Luo\thanks{Corresponding author: zxluo@dlut.edu.cn}}
\affil{School of Software, School of Mathematical Sciences, Dalian University of
Technology, Dalian, 116024, China} 



\date{Sept 5, 2007}

\maketitle

\begin{abstract}
In 1640's, Blaise Pascal discovered a remarkable property of a
hexagon inscribed in a conic - Pascal Theorem, which gave birth of
the projective geometry. In this paper, a new geometric invariant of
algebraic curves is discovered by a different comprehension to
Pascal¡¯s mystic hexagram or to the Pascal theorem. Using this
invariant, the Pascal theorem can be generalized to the case of
cubic (even to algebraic curves of higher degree), that is, {\em For
any given 9 intersections between a cubic $\Gamma_3$ and any three
lines $a,b,c$ with no common zero, none of them is a component of
$\Gamma_3$, then the six points consisting of the three points
determined by the Pascal mapping applied to any six points (no three
points of which are collinear) among those 9 intersections as well
as the remaining three points of those 9 intersections must lie on a
conic.} This generalization differs quite a bit and is much simpler
than Chasles's theorem and Cayley-Bacharach theorems.
\\[6pt]
\textbf{Keywords:}
 Algebraic curve; Pascal theorem; Characteristic ratio;
Characteristic mapping; Characteristic number; spline.
\end{abstract}

\markboth{An Invariant from the Pascal Theorem} {Zhongxuan Luo}

\vspace{-5mm}
\section{Introduction}

Algebraic curve is a classical and an important subject in algebraic
geometry. An algebraic plane curve is the solution set of a
polynomial equation $P(x,y)=0$, where $x$ and $y$ are real or
complex variables, and the degree of the curve is the degree of the
polynomial $P(x,y)$. Let $\mathbb{P}^2$ be the projective plane and
$\mathbb{P}_n$ be the space of all homogeneous polynomials in
homogeneous coordinates $(x,y,z)$ of total degree $\leq n$. An
algebraic curve $\Gamma_n$ in the projective plane is defined by the
solution set of a homogeneous polynomial equation $P(x,y,z)=0$ of
degree $n$.

In 1640, Blaise Pascal discovered a remarkable property of a hexagon
inscribed in a circle, shortly thereafter Pascal realized that a
similar property holds for a hexagon inscribed in an ellipse even a
conic. As the birth of the projective geometry, Pascal theorem
assert: {\em If six points on a conic section is given and a hexagon
is made out of them in an arbitrary order, then the points of
intersection of opposite sides of this hexagon will all lie on a
single line.} The generalizations of Pascal's theorem have a
glorious history. It has been a subject of active and exciting
research. As generalizations of the Pascal theorem, Chasles's
theorem and Cayley-Bacharach theorems in various versions received a
great attention both in algebraic geometry and in multivariate
interpolation.  A detailed introduction to Cayley-Bacharach theorems
as well as conjectures can be found in \cite{EGH96,G-J,S-R}.

%

The Pascal theorem can be comprehended in the following several
aspect: first, it is easy to verify that Pascal's theorem can be
proved by Chasles's theorem\cite{EGH96} and therefore, probably,
Chasles's theorem has been regarded as a generalization of Pascal's
theorem in the literature. However, the Chasles's theorem and
Cayley-Bacharach theorems have not formally inherited the appearance
of the Pascal theorem, that is the three points joined a line are
obtained from intersections of three pair of lines in which each
line was determined by two points lying on a conic; Secondly, the
Pascal theorem can be used to (geometrically) judge whether or not
any six points simultaneously lie on a conic. Another interesting
observation to the Pascal theorem is that it plays a key role in
revealing the instability of a linear space $S_2^1(\Delta_{MS})$
(the set of all piecewise polynomials of degree 2 with global
smoothness 1 over Morgan-Scott triangulation, see figure 5.1 and
refer to Appendix 5.1). That is, the Pascal theorem gives an
equivalent relationship between the algebraic and geometric
conditions to the instability of $S_2^1(\Delta_{MS})$(refer to
Appendix 5.1).

Actually, readers will see that the Pascal theorem contains a
geometric invariant of algebraic curves, which is exactly reason
that we rake up the Pascal theorem in this paper. In order to get
this new invariant of algebraic curves, one must observe the Pascal
theorem from a different viewpoint in which ``arbitrary six points
are given by intersections of a conic and any three lines without no
common zero" instead of ``six points on a conic section is given" (a
historical viewpoint) in the Pascal theorem. This slight different
comprehension to the Pascal theorem makes us easily generalize the
Pascal theorem to algebraic curves of higher degrees and discover an
invariant of algebraic curves. Similar to the source of this paper
in which all involved points are the set of intersections between
lines and a curve, \cite{KG2006} has given some interesting results
to the special case of the following classical problem: Let $X$ be
the intersection set of two plane algebraic curves $\mathcal {D}$
and $\mathcal {E}$ that do not share a common component. If $d$ and
$e$ denote the degrees of $\mathcal {D}$ and $\mathcal {E}$,
respectively, then $X$ consists of at most $d\cdot e$ points (a week
form of Bezout's theorem\cite{Walker50}). When the cardinality of
$X$ is exactly $d\cdot e$, $X$ is called a complete intersection.
How does one describe polynomials of degree at most $k$ that vanish
on a complete intersection $X$ or on its subsets? The case in which
both plane curves $\mathcal {D}$ and $\mathcal {E}$ are simply
unions of lines and the union $\mathcal {D} \cup \mathcal {E}$ is
the $(d\times e)$-cage in question.

Our main results in this paper are enlightened by studying the
instability of spline space and are proved by spline method and the
``principle of duality " in the projective plane. This paper is
organized as follows: In section 2, some basic preliminaries of the
projective geometry are given. In section 3, some new concepts such
as characteristic ratio, characteristic mapping and characteristic
number of algebraic curve are introduced by discussing the
properties of a line and a conic. Section 4 gives our main results
for the invariant of cubic and presents a generalization of the
Pascal type theorem to cubic. Moreover, some corresponding
conclusions to the case of algebraic curves of higher degrees
$(n>3)$ are also stated in this section without proofs. The basic
theory of bivariate spline , a series of results on the singularity
of spline space and the proof of the main result of this paper are
given in Appendix in the end of the paper.

\section{Preliminaries of Projective Geometry}

It is well known that the ``homogeneous coordinates" and the ``
principle of duality"\footnote{Poncelet claimed this principle as
his own discovery, but its nature was more clearly understood by
another Franchman, J. D. Gergonne(1771-1859)\cite{Coxeter}. } are
the essential tools in the projective geometry. A point is the set
of all triads equivalent to given triad $(x)=(x_1,x_2,x_3)$, and a
line is the set of all triads equivalent to given triad
$[X]=[X_1,X_2,X_3]$. By a suitable multiplication (if necessary),
any point in the projective plane can be expressed in the form
$(x_1,x_2,1)$, which can be shortened to $(x_1,x_2)$, and the two
numbers $x_1$ and $x_2$ are called the affine coordinates. In other
words, if $x_3\neq 0$, the point $(x_1,x_2,x_3)$ in the projective
plane can be regarded as the point $(x_1/x_3,x_2/x_3)$ in the affine
plane. The ``principle of duality" in the projective plane can be
seen clearly from the following result: "three points $(u),(v)$ and
$(w)$ in $\mathbb{P}^2$ are collinear" is equivalent to "three lines
$[u],[v]$ and $[w]$ in $\mathbb{P}^2$ are concurrent". In fact, the
necessary and sufficient condition for the both statements is: there
are numbers $\lambda, \mu, \nu$, not all zero, such that $\lambda
u_i+\mu v_i +\nu w_i=0 (i=1,2,3),$ namely,
\begin{eqnarray*}
\left|
  \begin{array}{ccc}
    u_1 & u_2 & u_3 \\
    v_1 & v_2 & v_3 \\
    w_1 & w_2 & w_3 \\
  \end{array}
\right|=0.
\end{eqnarray*}
If $(u),(v)$ are distinct points, $\nu \neq 0.$ Hence the general
point collinear with $(u)$ and $(v)$ can be formed a linear
combination of $(u)$ and $(v)$. In other word, a point
$(u)=(u_1,u_2,u_3)\in \mathbb{P}^2$ corresponds uniquely to a line
$[u]=[u_1,u_2,u_3]:u_1x+u_2y+u_3z=0$, while a line
$[u]=[u_1,u_2,u_3]:u_1x+u_2y+u_3z=0$ corresponds uniquely a point
$(u)=(u_1,u_2,u_3)$. We say that a point $(u)$ and the corresponding
line $[u]$ are dual to each other - which is the two-dimensional
``principle of duality". Under this duality, it follows the
following definition.
\begin{definition}[Duality of planar figure] Let $\Delta$ be a
planar figure consisting of lines and points in the projective
plane. A planar figure obtained by the corresponding dual lines and
points of the points and lines in $\Delta$ respectively is called
the {\em Dual} figure of $\Delta$, denotes by $\Delta^*$.
\end{definition}
For instance, the dual figure of Fig. \ref{draftfigure} is shown in
Fig. \ref{linespoints}, where $[\cdot]$ represents the corresponding
dual line of the point $(\cdot)$ in Fig. \ref{draftfigure}.

\begin{figure}[h]
\begin{minipage}[b]{0.45\textwidth}
\centering
\includegraphics[width=\textwidth]{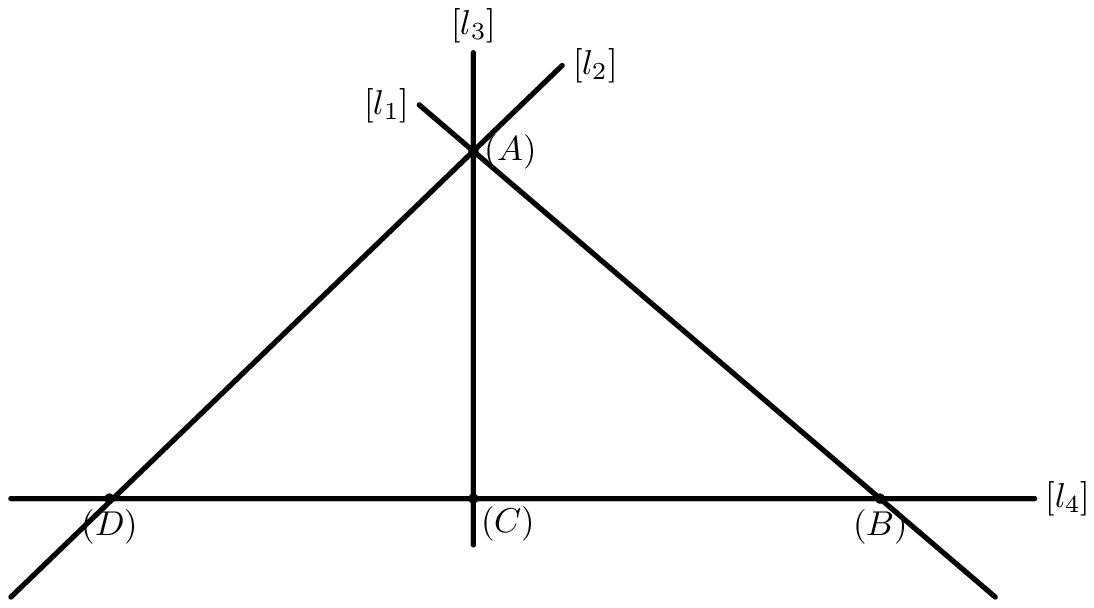}\\
\label{draftfigure}
\caption{}
\end{minipage}
\begin{minipage}[b]{0.4\textwidth}
\centering
\includegraphics[width=0.6\textwidth]{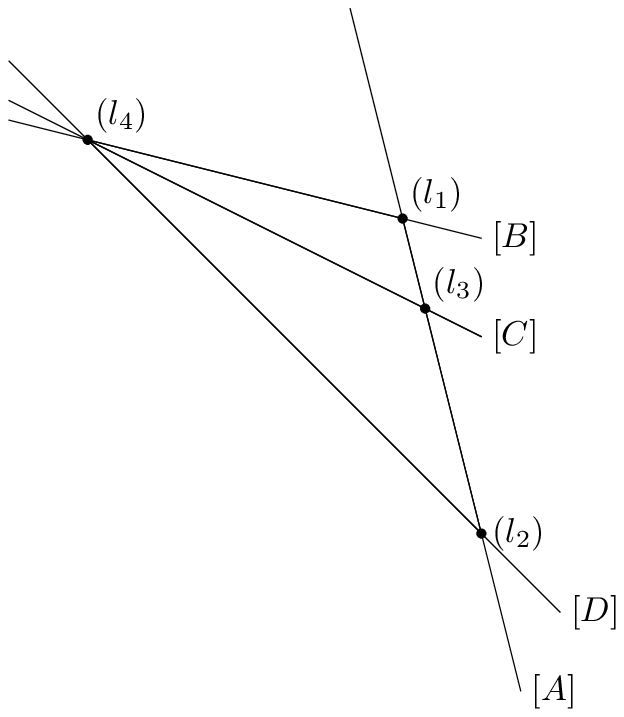}\\
\label{linespoints}
\caption{}
\end{minipage}
\end{figure}

\section{New Definitions}

In what follows, we shall use $u$ to represent a point $(u)$ or a
line $[u]$ when no ambiguities exist, $u=<a,b>$ for the intersection
point of lines $a$ and $b$, and $a=(u,v)$ for the line which joins
the points $u$ and $v$.
\begin{figure}[h]
\begin{minipage}{0.4\textwidth}
\centering
\includegraphics[width=\textwidth]{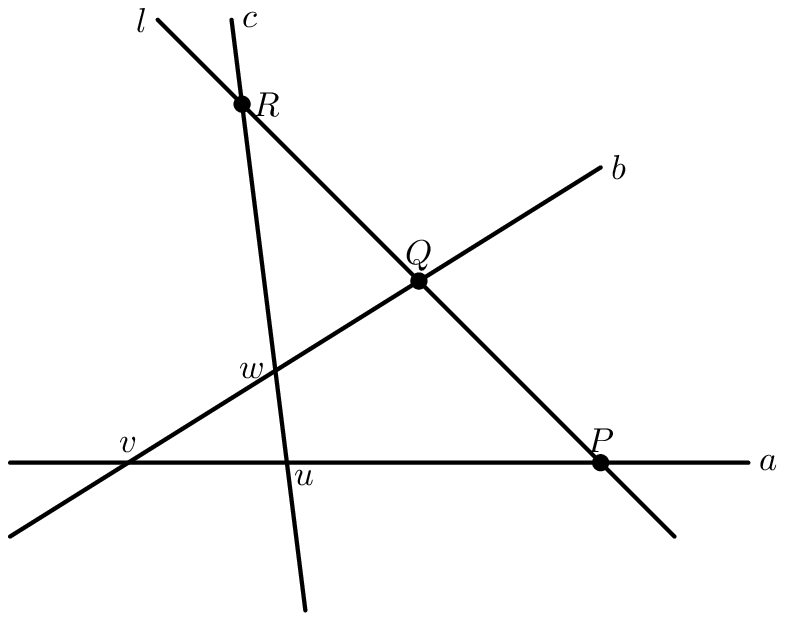}\\
\label{cutbyline} \caption{}
\end{minipage}
\begin{minipage}{0.58\textwidth}
First, we review the following properties of a line and a conic.
Suppose a line $l$ be cut by any three lines $a, b$ and $c$ with no
common zero (see Fig. \ref{cutbyline}). Let $u=<c,a>,v=<a,b>$ and
$w=<b,c>$, $P=<l,a>, Q=<l, b>$ and $R=<l,c>$. Obviously, there exist
numbers $a_i,b_i\ \ (i=1,2,3)$ such that $P=a_1u+b_1v, Q=a_2v+b_2w,
R=a_3w+b_3u,$ provided in turn $u,v,w$, then we have

\begin{proposition}
$$\frac{b_1}{a_1}\cdot \frac{b_2}{a_2}\cdot \frac{b_3}{a_3}=-1.$$
\end{proposition}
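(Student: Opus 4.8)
The plan is to recognize this identity as a projective (determinant) form of Menelaus's theorem, in which the transversal $l$ cuts the sides of the triangle whose vertices are $u=\langle c,a\rangle$, $v=\langle a,b\rangle$, $w=\langle b,c\rangle$. The structural fact I would exploit is that $P$, $Q$, $R$ all lie on the single line $l$ and are therefore collinear; in homogeneous coordinates this is equivalent to the vanishing of the $3\times 3$ determinant formed by the coordinate triples of $P$, $Q$, $R$. So the whole proof reduces to computing that determinant after substituting the given representations $P=a_1u+b_1v$, $Q=a_2v+b_2w$, $R=a_3w+b_3u$.

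First I would justify that each of these representations actually exists and that the three vertices are genuinely distinct. Since $P=\langle l,a\rangle$ lies on line $a$ and both $u$ and $v$ lie on $a$, while $u\neq v$ (otherwise that point would lie on $a$, $b$ and $c$ simultaneously, contradicting that $a,b,c$ have no common zero), the vectors $u,v$ span the line $a$ and $P$ is a combination of them; the same reasoning applies to $Q$ on $b$ and $R$ on $c$. The same no-common-zero hypothesis shows that $u,v,w$ are not collinear, i.e. $\det(u,v,w)\neq 0$.

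Next I would expand the determinant multilinearly. Writing $D=\det(u,v,w)$ and substituting, the expansion produces eight terms; every term containing a repeated vector among its three columns vanishes, so only two survive: the all-$a$ term $a_1a_2a_3\det(u,v,w)$ and the all-$b$ term $b_1b_2b_3\det(v,w,u)$. Since $(u,v,w)\mapsto(v,w,u)$ is an even (cyclic) permutation, $\det(v,w,u)=D$, and collinearity of $P,Q,R$ forces
$$(a_1a_2a_3+b_1b_2b_3)\,D=0.$$
Because $D\neq 0$, this gives $a_1a_2a_3+b_1b_2b_3=0$, which is exactly $\frac{b_1}{a_1}\cdot\frac{b_2}{a_2}\cdot\frac{b_3}{a_3}=-1$.

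The computation itself is short; the only real care is bookkeeping. The main obstacle --- really the only nontrivial point --- is the sign, i.e. verifying that the two surviving determinants carry the same orientation (both equal to $+D$) rather than opposite signs, since getting this wrong would flip the $-1$ to $+1$. This hinges on the cyclic arrangement of the pairs $(u,v),(v,w),(w,u)$ built into the hypotheses, which is precisely what makes the product equal $-1$. A secondary point worth a remark is that the statement tacitly assumes a nondegenerate configuration (none of the $a_i,b_i$ zero, and $l$ not passing through a vertex), so that all three ratios are finite and nonzero.
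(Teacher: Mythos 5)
Your proof is correct and follows essentially the same route as the paper: both arguments reduce collinearity of $P,Q,R$ to the vanishing of a $3\times 3$ determinant and extract $a_1a_2a_3+b_1b_2b_3=0$. The only difference is cosmetic --- the paper normalizes $u,v,w$ to the standard basis so the determinant is written out explicitly, whereas you keep $\det(u,v,w)$ general and expand multilinearly; your added justification that $\det(u,v,w)\neq 0$ (from the no-common-zero hypothesis) is exactly what licenses the paper's ``without loss of generality.''
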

\notag
\end{minipage}
\end{figure}

\begin{proof} Without loss of generality, we assume
that $u=(1,0,0),v=(0,1,0)$ and $w=(0,0,1)$. Since $P, Q$ and $R$ are
collinear, hence
\begin{displaymath}
\left|
\begin{array}{ccc}
a_1 & b_1 & 0 \\
0   & a_2 & b_2\\
b_3 & 0   & a_3
\end{array}
\right|=0.
\end{displaymath}
It follows that $\frac{b_1}{a_1}\cdot \frac{b_2}{a_2}\cdot
\frac{b_3}{a_3}=-1.$
\end{proof}
With the same notations, it follows that the necessary and
sufficient condition for $P,Q$ and $R$ to be collinear is
$\frac{b_1}{a_1}\cdot \frac{b_2}{a_2}\cdot \frac{b_3}{a_3}=-1$.

Now let us replace the line $l$ in proposition 3.1 by a conic
$\Gamma$. There are two intersections between $\Gamma$ and each
$a,b,c$. Let $\{p_1, p_2\}=<\Gamma, a>$, $\{p_3, p_4\}=<\Gamma, b>$
and $\{p_5, p_6\}=<\Gamma, c>$. Consequently, there are real numbers
$\{a_i,b_i\}_{i=1}^6$ such that
\begin{eqnarray}
\left\{%
\begin{array}{ll}
    p_1=a_1u+b_1v \\
    p_2=a_2u+b_2v
\end{array}%
\right. ,
  \left\{%
\begin{array}{ll}
    p_3=a_3v+b_3w \\
    p_4=a_4v+b_4w
\end{array}%
\right.  \mbox{and}
 \left\{%
\begin{array}{ll}
    p_5=a_5w+b_5u \\
    p_6=a_6w+b_6u
\end{array}%
\right..
\end{eqnarray}

We have
\begin{theorem}
Let a conic be cut by any three lines with no common zero. Under the
notations above, we have
\begin{eqnarray}
\frac{b_1b_2}{a_1a_2}\cdot \frac{b_3b_4}{a_3a_4}\cdot
\frac{b_5b_6}{a_5a_6}=1.
\end{eqnarray}
\end{theorem}
\begin{proof} Let $u=<c,a>,v=<a,b>$,
$w=<b,c>$. Notice that the duality of the figure composed of the
points $\{p_i\}_{i=1}^6$, $u,v,w$ and the lines $a,b,c$ turns out a
planar figure with a structure of Morgan-Scott triangulation with
inner edges consists of the dual lines of the points
$\{p_i\}_{i=1}^6$, $u,v,w$ (see Fig. 6). Note that the six points
$\{p_i\}_{i=1}^6$ lie on a conic, it is shown from Theorem 5.4 (see
appendix 5.1) that the spline space $S_2^1(\Delta_{MS})$ (the set of
all piecewise polynomial of degree 2 with smoothness 1 over
Morgan-Scott triangulation $\Delta_{MS}$) is singular, that is $\dim
S_2^1(\Delta_{MS})=7$. Which implies from Theorem 5.5 (see appendix
5.1) that Theorem 3.3 thus follows.
\end{proof}

On the other hand, Theorem 3.2 can be used to tell whether or not
any six points simultaneously lie on a conic. In fact, let $p_i \in
\mathbb{P}^2 (i=1,2,\cdots,6)$ be any six distinct points without
any three points are collinear, $a=(p_1,p_2), b=(p_3,p_4)$,
$c=(p_5,p_6)$, and $u=<c,a>,v=<a,b>$, $w=<b,c>$. Using the same
notations as in (3.2), it follows from the proof of Theorem 3.2 that
\begin{proposition} For any given six points $p_1,p_2,\cdots,p_6$ without
no three points are collinear, (3.2) is a necessary and sufficient
condition for those six points to be lying on a conic.
\end{proposition}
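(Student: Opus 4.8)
The plan is to treat the two implications separately, since the necessity is already essentially in hand. If $p_1,\dots,p_6$ lie on a conic $\Gamma$, then the line $a=(p_1,p_2)$ meets $\Gamma$ in exactly the two points $p_1,p_2$, and likewise $b=(p_3,p_4)$ and $c=(p_5,p_6)$ meet $\Gamma$ in $\{p_3,p_4\}$ and $\{p_5,p_6\}$; thus the hypotheses of Theorem 3.2 are met with this $\Gamma$ and these three lines, and (3.2) follows at once. So the real content of the proposition is the sufficiency: assuming (3.2), I must produce a conic through all six points.

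For sufficiency I would build a conic from five of the points and force the sixth onto it. Since no three of $p_1,\dots,p_6$ are collinear, the five points $p_1,\dots,p_5$ lie on a unique conic $\Gamma$, and this $\Gamma$ is necessarily nondegenerate, because a degenerate conic is a union of (at most) two lines and can carry at most four points with no three collinear. The line $c=(p_5,p_6)$ already passes through $p_5\in\Gamma$, so it meets $\Gamma$ in a second point $p_6'$; writing $p_6'=a_6'w+b_6'u$ as in (3.1), this $p_6'$ is the unique point of $c$ other than $p_5$ lying on $\Gamma$.

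Now I would apply the necessity direction (Theorem 3.2) to the six points $p_1,p_2,p_3,p_4,p_5,p_6'$, which all lie on $\Gamma$ and are cut out by the same three lines $a=(p_1,p_2)$, $b=(p_3,p_4)$, $c=(p_5,p_6')$ with the same vertices $u,v,w$. This yields
\[
\frac{b_1b_2}{a_1a_2}\cdot\frac{b_3b_4}{a_3a_4}\cdot\frac{b_5b_6'}{a_5a_6'}=1,
\]
in which the first two factors and the coefficients $a_5,b_5$ are identical to those in the hypothesis (3.2). Dividing the two relations gives $b_6'/a_6'=b_6/a_6$. Because the line $c$ is spanned by $u$ and $w$, a point $aw+bu$ on $c$ is determined projectively by the single ratio $[a:b]$; hence $p_6'=p_6$ as points of $\mathbb{P}^2$, so $p_6\in\Gamma$ and all six points lie on the conic $\Gamma$.

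The step I expect to require the most care is the degenerate configuration in which $c$ is tangent to $\Gamma$ at $p_5$, so that the second intersection $p_6'$ collapses onto $p_5$. In that case the ratio extracted above would read $b_6/a_6=b_5/a_5$, forcing $p_6=p_5$ and contradicting the distinctness of the six points; I would argue that this shows the tangential case cannot occur under the hypotheses, or else avoid it by choosing the five interpolation points so that the remaining point lies on a chord meeting $\Gamma$ transversally. The remaining bookkeeping, namely that the coefficients $a_i,b_i$ for $p_1,\dots,p_5$ are genuinely unchanged when $p_6$ is replaced by $p_6'$ and that $[a:b]$ is a faithful coordinate on $c$, is routine.
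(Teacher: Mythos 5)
Your proof is correct, but it takes a genuinely different route from the paper's. The paper obtains Proposition 3.3 as a byproduct of the \emph{proof} of Theorem 3.2: dualizing the configuration to a Morgan--Scott triangulation and chaining the two equivalences of Theorems 5.4 and 5.5 (six dual points on a conic $\Leftrightarrow$ $S_2^1(\Delta_{MS})$ singular $\Leftrightarrow$ (3.2)), so both directions come simultaneously from the spline-space characterization. You instead use only the \emph{statement} of Theorem 3.2 (the necessity direction) and bootstrap sufficiency by the classical fact that five points, no three collinear, determine a unique nondegenerate conic $\Gamma$, then force $p_6$ onto $\Gamma$ by comparing the ratio $b_6/a_6$ with $b_6'/a_6'$ for the residual intersection $p_6'$ of $c$ with $\Gamma$. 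This is more elementary and self-contained --- it makes the proposition a formal consequence of the theorem rather than of its spline-theoretic proof --- at the cost of the small amount of bookkeeping you already identify: the tangential case (which, as you note, is excluded because it would force $p_6=p_5$, and which in any event is covered by the limit-form convention in Definition 3.5), and the fact that a point $a_6w+b_6u$ of $c$ is recovered from the ratio $b_6/a_6$ alone (here one should also observe that $a_6,b_6\neq 0$, since $p_6=u$ or $p_6=w$ would put three of the given points on a line). Both approaches are valid; the paper's gives the equivalence in one stroke but leans on external machinery, while yours isolates exactly how much of Theorem 3.2 is really needed.
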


Actually, Theorem 3.2 is equivalent to the Pascal theorem.
\\
\textbf{[Proof of Pascal theorem.]} Let $p_i \in \mathbb{P}^2
(i=1,2,\cdots,6)$ be any six distinct points without any three
points are collinear. Denoted by $a=(p_1,p_2), b=(p_3,p_4),
c=(p_5,p_6)$ and $u=<c,a>,v=<a,b>$, $w=<b,c>$. Without loss of
generality, we assume $u=(1,0,0),v=(0,1,0)$ and $w=(0,0,1)$. Since
the 6 points $p_i \in \mathbb{P}^2 (i=1,2,\cdots,6)$ lie on a conic,
(3.2) holds. It is clear that
\begin{center}
$
\begin{array}{ll}
q_1=<(p_1,p_2),(p_{4},p_{5})>=(b_{4}b_{5},-a_{4}a_{5},0)=b_{4}b_{5}u-a_{4}a_{5}w,\\
q_2=<(p_2,p_{3}),(p_{5}p_{6})>=(a_2a_{3},0,-b_2b_{3})=-b_2b_{3}v+a_2a_{3}u,\\
q_3=<(p_{3},p_{4}),(p_1,p_{6})>=(0,-b_1b_{6},a_1a_{6})=-b_1b_{6}w+a_1a_{6}v,\\
\end{array}
$
\end{center}
and (3.2) is equivalent to
\begin{eqnarray}
(-\frac{
b_1b_6}{a_1a_6})\cdot(-\frac{b_2b_3}{a_2a_3})\cdot(-\frac{b_4b_5}{a_4a_5})
=-1.
\end{eqnarray}
By Proposition 3.1, three points $\{q_1,q_2,q_3\}$ must be
collinear. This is the conclusion of the Pascal theorem.

Notice that Proposition 3.1 and Theorem 3.2, -1 and 1 are invariants
of line and conic respectively. We therefore introduce the following
definitions.

\begin{definition}[Characteristic ratio] Let $u,v \in
\mathbb{P}^2$ be two distinct points (or lines),
$p_1,p_2,\cdots,p_k$ be points (or lines) on the line $(u,v)$ (or
passing through $<u,v>$), then there are numbers $a_i, b_i$ such
that $p_i=a_iu+b_iv, i=1,2,\cdots,k$. The ratio
$$[u,v;p_1,\cdots,p_k]:=\frac{b_1b_2\cdots,b_k}{a_1a_2\cdots,a_k}$$
is called the \textbf{\em Characteristic ratio} of
$p_1,p_2,\cdots,p_k$ with respect to the basic points (or lines)
$u,v$. If there are multiple points in the intersection points, the
corresponding characteristic ratio is defined by their limit form.
\end{definition}
\begin{remark}
For four collinear points $u,v,p_1,p_2$, while the Characteristic
ratio of $p_1,p_2$ with respective to $u,v$ is
$\frac{b_1b_2}{a_2b_2}$, the cross ratio in the projective geometry
is defined as $\frac{a_1b_2}{a_2b_1}$.
\end{remark}
\begin{definition}[Characteristic mapping]
Let $u$ and $v$ be two distinct points, and the line $(u,v)$ join
the points $p$ and $q$. We call $q ($or $p)$ the characteristic
mapping point of $p ($or $q)$ with respect to the basic points $u$
and $v$ if
$$[u,v;p,q]=1,$$
and denote $q=\chi_{(u,v)}(p)$ (or $p=\chi_{(u,v)}(q)$).
\end{definition}
Apparently if $q$ is the characteristic mapping point (or line) of
$p$, then $p$ is the characteristic mapping of $q$ as well. That is,
the characteristic mapping is reflexive, i.e., $\chi_{(u,v)}\circ
\chi_{(u,v)}=I$ (identity mapping). Geometrically, $\chi_{(u,v)}(p)$
and $p$ are symmetric with respect to the mid-point of $u$ and $v$.

From the definition of the characteristic mapping, Proposition 3.1
and Theorem 3.2, the property of the characteristic mapping can be
shown in the following corollaries.
\begin{corollary} Any three points $P, Q$ and $R$ in the
projective plane $\mathbb{P}^2$ are collinear if and only if their
characteristic mapping points $\chi_{(u,v)}(P),\chi_{(v,w)}(Q)$ and
$\chi_{(w,u)}(R)$ are collinear.
\end{corollary}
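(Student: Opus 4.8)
The plan is to reduce the claim to Proposition 3.1 together with an explicit coordinate description of the characteristic mapping. First I would note that the statement tacitly requires $P$ to lie on the line $(u,v)$, $Q$ on $(v,w)$, and $R$ on $(w,u)$, since the maps $\chi_{(u,v)}$, $\chi_{(v,w)}$, $\chi_{(w,u)}$ are defined only there. Taking $u,v,w$ as the vertices of the reference triangle, I would write
$$P = a_1 u + b_1 v, \qquad Q = a_2 v + b_2 w, \qquad R = a_3 w + b_3 u,$$
exactly matching the hypotheses of Proposition 3.1.

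Next I would record how the characteristic mapping acts in these coordinates. By its defining relation $[u,v;P,P']=1$, if $P' = \chi_{(u,v)}(P) = \alpha u + \beta v$ then $\tfrac{b_1\beta}{a_1\alpha}=1$, whose unique projective solution is $(\alpha,\beta)=(b_1,a_1)$. Thus the characteristic mapping is nothing but the interchange of the two coordinates, giving
$$\chi_{(u,v)}(P)=b_1 u + a_1 v,\qquad \chi_{(v,w)}(Q)=b_2 v + a_2 w,\qquad \chi_{(w,u)}(R)=b_3 w + a_3 u.$$

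I would then apply Proposition 3.1 twice. In the necessary-and-sufficient form stated just after its proof, $P,Q,R$ are collinear if and only if $\tfrac{b_1}{a_1}\cdot\tfrac{b_2}{a_2}\cdot\tfrac{b_3}{a_3}=-1$. Applying the identical criterion to the three image points, whose coordinates are the swapped ones, the points $\chi_{(u,v)}(P),\chi_{(v,w)}(Q),\chi_{(w,u)}(R)$ are collinear if and only if $\tfrac{a_1}{b_1}\cdot\tfrac{a_2}{b_2}\cdot\tfrac{a_3}{b_3}=-1$.

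Finally I would close by observing that these two products are reciprocals of each other and that $-1$ is its own reciprocal: a nonzero quantity equals $-1$ exactly when its inverse equals $-1$. Hence the two collinearity conditions are identical, which is the asserted equivalence. I do not expect a real obstacle here; the only points needing care are verifying that the characteristic mapping is precisely the coordinate swap, so that no spurious sign or scale creeps in, and noting that the degenerate case of a vanishing coefficient---where a point would coincide with one of $u,v,w$---is excluded in accordance with the limiting convention built into the characteristic ratio.
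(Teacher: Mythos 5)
Your proof is correct and follows exactly the route the paper intends: the paper states this corollary without a written proof, remarking only that it follows from the definition of the characteristic mapping and Proposition 3.1, and your argument (the mapping is the coordinate swap $a_iu+b_iv\mapsto b_iu+a_iv$, so the collinearity product is replaced by its reciprocal, and $-1$ is self-reciprocal) is precisely that derivation made explicit. No gaps; your side remarks about the tacit incidence hypotheses and the degenerate coefficients are appropriate.
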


\begin{corollary} Any six distinct points $p_i \in
\mathbb{P}^2 (i=1,2,\cdots,6)$ lie on a conic if and only if the
image of their characteristic mapping $\chi_{(u,v)}(p_1)$,
$\chi_{(u,v)}(p_2)$, $\chi_{(v,w)}(p_3)$, $\chi_{(v,w)}(p_4)$,
$\chi_{(w,u)}(p_5)$ and $\chi_{(w,u)}(p_6)$ lie on a conic as well.
\end{corollary}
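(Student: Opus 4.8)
The plan is to turn the statement into a single algebraic identity by pushing everything through the criterion of Theorem 3.2 and its converse Proposition 3.3. First I would put the six points into the cyclic standard form of (3.1): setting $a=(p_1,p_2)$, $b=(p_3,p_4)$, $c=(p_5,p_6)$ and $u=<c,a>$, $v=<a,b>$, $w=<b,c>$, write $p_1=a_1u+b_1v$, $p_2=a_2u+b_2v$, $p_3=a_3v+b_3w$, $p_4=a_4v+b_4w$, $p_5=a_5w+b_5u$, $p_6=a_6w+b_6u$. By Definition 3.6 the characteristic mapping $\chi_{(u,v)}$ sends $au+bv$ to the point $bu+av$ on the same line, the only one for which the characteristic ratio equals $1$; hence each image is obtained merely by interchanging its two coefficients, e.g. $\chi_{(u,v)}(p_1)=b_1u+a_1v$. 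The key structural point I would stress is that swapping coefficients keeps each image on its original line, so $\chi_{(u,v)}(p_1),\chi_{(u,v)}(p_2)$ still lie on $a$, the next pair on $b$, the last pair on $c$, and the three cutting lines together with their vertices $u,v,w$ are exactly the same. Thus the six images are already presented in the cyclic standard form required to apply Theorem 3.2 and Proposition 3.3, with coefficient pairs $(b_i,a_i)$ in place of $(a_i,b_i)$.

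From here the argument is immediate. By Proposition 3.3 the original points lie on a conic if and only if $\frac{b_1b_2}{a_1a_2}\cdot\frac{b_3b_4}{a_3a_4}\cdot\frac{b_5b_6}{a_5a_6}=1$. Applying the same criterion to the images, whose standard coefficients are the swapped pairs, the product becomes $\frac{a_1a_2}{b_1b_2}\cdot\frac{a_3a_4}{b_3b_4}\cdot\frac{a_5a_6}{b_5b_6}$, which is precisely the reciprocal of the previous one. Since a nonzero number equals $1$ exactly when its reciprocal does, the two criteria are equivalent, and the biconditional follows. This is the conic counterpart of the collinearity argument behind Corollary 3.7, where the invariant value is $-1$ rather than $1$ and the same reciprocal trick applies.

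I do not expect a genuine obstacle here: once the coefficient-swapping description of $\chi$ is in place the proof is a one-line reciprocal identity. The only place requiring care is the bookkeeping that legitimizes invoking Proposition 3.3 for the images, namely that no three of them are collinear and that all the ratios are well defined. I would check that $\chi_{(u,v)}(p_1),\chi_{(u,v)}(p_2)$ are the only images on $a$ (and likewise for $b,c$), so the images again meet the three lines two at a time, and that $q_1\neq q_2$ whenever $p_1\neq p_2$ (since equality of the swapped points forces $b_1a_2=a_1b_2$); the characteristic ratios stay finite and nonzero as long as no $p_i$ falls on a vertex $u,v,w$, and the remaining degenerate configurations are absorbed by the limiting convention in Definition 3.4. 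With those caveats dispatched, the equivalence of the two reciprocal criteria completes the proof.
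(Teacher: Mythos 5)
Your proof is correct and follows exactly the route the paper intends: the paper states this corollary without an explicit proof, merely noting that it follows from the definition of the characteristic mapping together with Proposition 3.1 and Theorem 3.2, and your coefficient-swapping description of $\chi$ combined with the reciprocal identity $\frac{a_1a_2}{b_1b_2}\cdot\frac{a_3a_4}{b_3b_4}\cdot\frac{a_5a_6}{b_5b_6}=\left(\frac{b_1b_2}{a_1a_2}\cdot\frac{b_3b_4}{a_3a_4}\cdot\frac{b_5b_6}{a_5a_6}\right)^{-1}$ is precisely the computation being alluded to. Your bookkeeping on nondegeneracy (distinctness of the images, nonvanishing of the coefficients) is in fact more careful than the paper itself.
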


Bezout's theorem\cite{Walker50} says that {\em two algebraic curves
of degree $r$ and $s$ with no common components have exactly $r\cdot
s$ points in the projective complex plane}. In particular, a line
$l$ and an algebraic curve $C$ of degree $n$ without the component
$l$ meet in exactly $n$ points in the projective complex plane.

\begin{definition}[Characteristic number] Let $\Gamma_n$ be an algebraic curve of
degree $n$, and $a,b,c$ be any three distinct lines (without common
zero) where none of them is a component of $\Gamma_n$. Suppose that
there exist $n$ intersections between the each line and $\Gamma$,
and denoted by $\{p_i^{(a)},p_i^{(b)},$ $p_i^{(c)}\}_{i=1}^n$ the
intersections between $\Gamma_n$ and the lines $a,b,c$,
respectively. Let $u=<c,a>,v=<a,b>$, $w=<b,c>$.  The number
$$\mathcal {K}_n(\Gamma_n):=[u,v;p_1^{(a)},\cdots,p_n^{(a)}]\cdot
[v,w;p_1^{(b)},\cdots,p_n^{(b)}]\cdot[w,u;p_1^{(c)},\cdots,p_n^{(c)}],$$
independent of $a,b$ and $c$ (See Theorem 4.4 below), is called the
\textbf{\em characteristic number} of algebraic curve $\Gamma_n$ of
degree $n$.
\end{definition}

It is obvious from the Definition 3.9 that if $\Gamma_n$ is a
reducible curve of degree $n$ and has components $\Gamma_{n_1}$ and
$\Gamma_{n_2}, n=n_1+n_2$ , then $\mathcal K_n(\Gamma_n)=\mathcal
K_{n_1}(\Gamma_{n_1})\cdot \mathcal K_{n_1}(\Gamma_{n_1})$. From the
discussion below the Characteristic number is a global invariant of
algebraic curves.

By Definition 3.9, the characteristic numbers of line and conic are
-1 and +1 respectively.

\begin{definition}[Pascal mapping]
For any 6 points $p_1,p_2,\cdots,p_6$ without any three points are
collinear in the projective plane, first define $\Phi$ by
$$\Phi(\{p_1,p_2,\cdots,p_6\})=\{q_1,q_2,q_3\},$$
where $q_1=<(p_1,p_2),(p_4,p_5)>,q_2=<(p_2,p_3),(p_5,p_6)>$ and
$q_3=<(p_3,p_4),$ $(p_6,p_1)>$(i.e. $\{q_i\}_{i=1}^3$ are the three
pairs of the continuations of opposite side of the hexagon
determined by $\{p_i\}_{i=1}^6$). Then the {\em \textbf{Pascal
mapping}} $\Psi$ to $\{p_1,p_2,\cdots,p_6\}$ is defined by
$$\Psi\{p_1,p_2,\cdots,p_6\}:=\chi\circ \Phi\{p_1,p_2,\cdots,p_6\}
:=\{\chi_{(u,v)}(q_1),\chi_{(w,u)}(q_2),\chi_{(v,w)}(q_3)\},$$ where
$u=<(p_1,p_2),(p_5,p_6)>,v=<(p_1,p_2),(p_3,p_4)>$ and
$w=<(p_3,p_4),$ $(p_5,p_6)>$.
\end{definition}

Notice that the Pascal mapping on $p_1,p_2,\cdots,p_6$ giving above
depends on the order of $\{p_i\}_{i=1}^6$. One can also define the
Pascal mapping on $p_1,p_2,\cdots,p_6$ by $u=<(p_2,p_3),(p_4,p_5)>$,
$w=<(p_4,p_5),(p_1,p_6)>$ and $v=<(p_2,p_3),$ $(p_1,p_6)>$ instead,
which will not affect the result of the Pascal theorem (Theorem
3.11) giving below. But for the case of higher degrees as stated
below, we must insist on $u,v$ and $w$ being defined as in the
definition above.

\begin{figure}[h]
\begin{minipage}{0.45\textwidth}
\centering
  \includegraphics[width=0.9\textwidth]{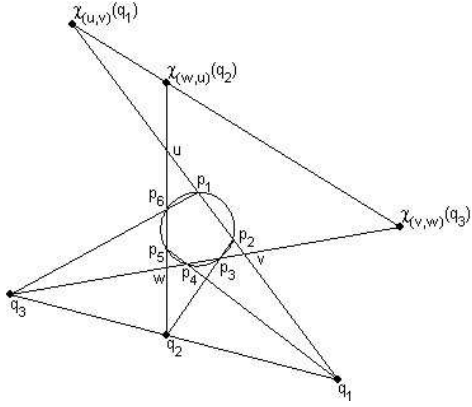}\\
\label{pascal} \caption{\small The Pascal mapping and Pascal
Theorem}
\end{minipage}
\begin{minipage}{0.5\textwidth}
Fig. 4 illustrates the Pascal mapping and the Pascal Theorem that
the three points
$\chi_{(u,v)}(q_1),\chi_{(w,u)}(q_2),\chi_{(v,w)}(q_3)$ are derived
by applying the Pascal mapping to $p_1,p_2,\cdots,p_6$. From
Corollary 3.7, we state the following version of Pascal theorem in
order to generalize it to cubic.
\begin{theorem}[Pascal theorem] For given 6 points
$p_1,p_2,\cdots,p_6$ on a conic section, the 3 points of image of
the Pascal mapping on these six points, $\Psi(\{p_i\}_{i=1}^6)$,
will all lie on a single line.
\end{theorem}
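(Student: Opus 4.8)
The plan is to show that Theorem 3.11 is an almost immediate consequence of the machinery already assembled, in particular of Corollary 3.7 combined with the identity for conics (Theorem 3.2). First I would unwind the definition of the Pascal mapping. Starting from six points $p_1,\dots,p_6$ lying on a conic $\Gamma$, the intermediate map $\Phi$ produces the three ``opposite side'' intersection points $q_1=<(p_1,p_2),(p_4,p_5)>$, $q_2=<(p_2,p_3),(p_5,p_6)>$, $q_3=<(p_3,p_4),(p_6,p_1)>$, which are exactly the points appearing in the classical Pascal configuration. The Pascal mapping then applies the characteristic mapping $\chi$ to each $q_i$ with respect to the appropriate pair of base points $u,v,w$ defined from the three lines $(p_1,p_2)$, $(p_3,p_4)$, $(p_5,p_6)$.

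The key step is to establish that the three points $q_1,q_2,q_3$ are themselves collinear, and then to transfer that collinearity through $\chi$. For the first part I would invoke the proof of Theorem 3.2 verbatim: normalizing $u=(1,0,0)$, $v=(0,1,0)$, $w=(0,0,1)$ and writing each $p_i$ in the form $a_iu+b_iv$ (or the analogous pairing), the hypothesis that the six points lie on a conic is exactly condition (3.2), $\frac{b_1b_2}{a_1a_2}\cdot\frac{b_3b_4}{a_3a_4}\cdot\frac{b_5b_6}{a_5a_6}=1$. The explicit coordinate computation already carried out in the proof of the Pascal theorem earlier in the excerpt shows that $q_1,q_2,q_3$ have coordinates expressible in the $a_i,b_i$, and that (3.2) rewrites as the determinant condition $(-\frac{b_1b_6}{a_1a_6})(-\frac{b_2b_3}{a_2a_3})(-\frac{b_4b_5}{a_4a_5})=-1$, which by Proposition 3.1 is precisely the condition for $q_1,q_2,q_3$ to be collinear. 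So the collinearity of the $\Phi$-images falls out directly.

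Next I would push this collinearity forward under the characteristic mappings. Here Corollary 3.7 does the work for free: it asserts that three points $P,Q,R$ are collinear if and only if their characteristic images $\chi_{(u,v)}(P)$, $\chi_{(v,w)}(Q)$, $\chi_{(w,u)}(R)$ are collinear. Applying this with $(P,Q,R)=(q_1,q_2,q_3)$ and the base-point pairs dictated by Definition 3.10, the established collinearity of $q_1,q_2,q_3$ is equivalent to the collinearity of $\chi_{(u,v)}(q_1)$, $\chi_{(w,u)}(q_2)$, $\chi_{(v,w)}(q_3)$, which by definition are the three points of $\Psi(\{p_i\}_{i=1}^6)$. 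This yields exactly the conclusion that the image of the Pascal mapping lies on a single line.

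The main obstacle I anticipate is purely bookkeeping rather than conceptual: one must verify that the pairing of base points $u,v,w$ with the points $q_1,q_2,q_3$ in Definition 3.10 matches the pairing required in the hypothesis of Corollary 3.7, since the corollary fixes a specific cyclic assignment $\chi_{(u,v)}, \chi_{(v,w)}, \chi_{(w,u)}$ while the Pascal mapping uses $\chi_{(u,v)}, \chi_{(w,u)}, \chi_{(v,w)}$. I would need to check that the geometric roles of $u=<(p_1,p_2),(p_5,p_6)>$, $v=<(p_1,p_2),(p_3,p_4)>$, $w=<(p_3,p_4),(p_5,p_6)>$ are consistent with the lines through each $q_i$, so that each characteristic mapping is indeed being applied along the line containing the corresponding $q_i$ with the correct two base points. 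Once this matching is confirmed, Corollary 3.7 applies cleanly and the theorem follows without further computation.
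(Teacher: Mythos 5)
Your proof is correct and follows essentially the same route as the paper: the paper obtains Theorem 3.11 by combining the earlier coordinate proof of the classical Pascal theorem (Theorem 3.2 together with Proposition 3.1 giving the collinearity of $q_1,q_2,q_3$) with Corollary 3.7, which is exactly your two-step argument. The base-point bookkeeping you flag does check out, since each $q_i$ lies on the line spanned by the pair of base points assigned to it in Definition 3.10, and Corollary 3.7 is symmetric under relabeling $u,v,w$.
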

\end{minipage}
\end{figure}

\section{Invariant and Pascal Type Theorem}
In this section, we show our main results on the characteristic
number to algebraic curves. Consequently, a generalization of the
Pascal theorem to curves of higher degree is given by the
``principle of duality" and the spline method.

For cubic, we have
\begin{theorem}
The characteristic number of cubic is $-1$, that is, $\mathcal
{K}_3(\Gamma_3)=-1.$
\end{theorem}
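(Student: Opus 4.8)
The plan is to compute $\mathcal{K}_3(\Gamma_3)$ directly by restricting the defining cubic to each of the three lines, rather than routing through the spline/duality machinery. The key preliminary observation is that the three characteristic ratios enter $\mathcal{K}_3$ in a combination that is invariant under rescaling the representatives $u,v,w$ (each of $u,v,w$ occurs once as a first and once as a second basic point, so the scaling factors cancel in the product), and a projective change of coordinates carries the triangle $u,v,w$ to any prescribed one. Hence I would fix the convenient frame $u=\langle c,a\rangle=(1,0,0)$, $v=\langle a,b\rangle=(0,1,0)$, $w=\langle b,c\rangle=(0,0,1)$. Since $a$ joins $u,v$, $b$ joins $v,w$, and $c$ joins $w,u$, the three lines become the coordinate lines $a=\{z=0\}$, $b=\{x=0\}$, $c=\{y=0\}$. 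Writing the cubic as $P(x,y,z)=\sum_{i+j+k=3}c_{ijk}\,x^iy^jz^k$, note that the three corner coefficients are the vertex values $c_{300}=P(u)$, $c_{030}=P(v)$, $c_{003}=P(w)$.

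Next I would read each characteristic ratio off a one-variable factorization. On line $a$ the three intersection points are $p_i^{(a)}=a_iu+b_iv=(a_i,b_i,0)$, so the restriction $P(x,y,0)=c_{300}x^3+c_{210}x^2y+c_{120}xy^2+c_{030}y^3$ vanishes precisely at $(x\!:\!y)=(a_i\!:\!b_i)$ and therefore factors as $P(x,y,0)=\alpha\prod_{i=1}^{3}(b_ix-a_iy)$. Comparing the $x^3$ and $y^3$ coefficients gives $\alpha\,b_1b_2b_3=c_{300}$ and $-\alpha\,a_1a_2a_3=c_{030}$, whence $[u,v;p_1^{(a)},p_2^{(a)},p_3^{(a)}]=\frac{b_1b_2b_3}{a_1a_2a_3}=-\frac{c_{300}}{c_{030}}$. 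The identical argument applied to $P(0,y,z)$ and to $P(x,0,z)$ yields $[v,w;p_1^{(b)},p_2^{(b)},p_3^{(b)}]=-\frac{c_{030}}{c_{003}}$ and $[w,u;p_1^{(c)},p_2^{(c)},p_3^{(c)}]=-\frac{c_{003}}{c_{300}}$.

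Finally I would multiply the three ratios: the corner coefficients cancel in the telescoping chain $c_{300}\to c_{030}\to c_{003}\to c_{300}$, while the three minus signs combine to $(-1)^3$, giving
$$\mathcal{K}_3(\Gamma_3)=\Big(-\frac{c_{300}}{c_{030}}\Big)\Big(-\frac{c_{030}}{c_{003}}\Big)\Big(-\frac{c_{003}}{c_{300}}\Big)=-1.$$
The same scheme shows at once that $\mathcal{K}_n(\Gamma_n)=(-1)^n$ in general, recovering $-1$ for a line and $+1$ for a conic, and it simultaneously establishes the independence from $a,b,c$ (the answer never saw the off-corner coefficients). The step I expect to be the main obstacle is the degenerate case in which a vertex lies on the curve, i.e. some corner coefficient vanishes ($c_{300}=P(u)=0$, and so on); then an intersection point coincides with a basic point and the corresponding ratio becomes a $0/0$ or $\infty$ expression. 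I would dispatch this either through the limiting/multiplicity convention built into Definition 3.4, or by perturbing the three lines into general position (keeping $u,v,w\notin\Gamma_3$) and passing to the limit using continuity of the characteristic ratio in the line parameters.
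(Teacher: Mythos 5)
Your proof is correct, but it takes a genuinely different route from the paper. The paper proves Theorem 4.1 by dualizing the configuration (the lines $a,b,c$, the vertices $u,v,w$, and the nine intersection points) into a Morgan--Scott type partition $\Delta_{MS}^2$, then invoking Theorem 5.8 (the nine dual points lie on a cubic distinct from $l_a\cdot l_b\cdot l_c=0$ iff $S_3^2(\Delta_{MS}^2)$ is singular) together with the algebraic singularity criterion of Theorem 5.7 with $\mu=2$, which produces the value $(-1)^{\mu+1}=-1$; that chain in turn rests on the smoothing-cofactor conformality analysis and a rank estimate for the $10\times 9$ coefficient matrix. You instead compute directly: after checking that $\mathcal{K}_3$ is insensitive to rescaling the representatives of $u,v,w$ (each vertex occurs once as a first and once as a second basic point, so the scale factors cancel), you normalize $u,v,w$ to the coordinate triangle, restrict $P$ to each coordinate line, factor the resulting binary cubic, and read off each characteristic ratio as minus a quotient of corner coefficients; the product telescopes to $-1$. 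Your computations check out (e.g.\ on $z=0$ the comparison of the $x^3$ and $y^3$ coefficients of $\alpha\prod(b_ix-a_iy)$ does give $b_1b_2b_3/a_1a_2a_3=-c_{300}/c_{030}$, and similarly for the other two lines). What each approach buys: the paper's detour through spline singularity is the thematic point of the article, tying the invariant to $\dim S_3^2(\Delta_{MS}^2)=11$, whereas your argument is self-contained and elementary, makes the independence from $a,b,c$ transparent (only the three corner coefficients ever appear), and yields the general statement $\mathcal{K}_n(\Gamma_n)=(-1)^n$ of Theorem 4.4 with no extra work --- something the paper only asserts without proof. Your flagged degenerate case (a vertex of the triangle lying on $\Gamma_3$, so a corner coefficient vanishes and one ratio is $0$ while another is $\infty$) is real, but the paper is no more careful there; handling it by perturbing the lines and passing to the limit, or by the limiting convention of Definition 3.4, is adequate.
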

\begin{proof}
See Appendix 5.2.
\end{proof}
Let $a,b$ and $c$ be any three distinct lines with no common zero in
the projective plane, denoted by $u=<c,a>,v=<a,b>,w=<b,c>$. Assume
that $p_1,p_2,p_3$ are three points on $a$, $p_4,p_5,p_6$ are on
$b$, and $p_7,p_8,p_9$ are on $c$, then there exist real numbers
$a_i,b_i, i=1,2,\cdots,9$ such that
\begin{eqnarray}
\left\{%
\begin{array}{ll}
    p_1=a_1u+b_1v \\
    p_2=a_2u+b_2v \\
    p_3=a_3u+b_3v \\
\end{array}%
\right. ,
  \left\{%
\begin{array}{ll}
    p_4=a_4v+b_4w \\
    p_5=a_5v+b_5w \\
    p_6=a_6v+b_6w \\
\end{array}%
\right.  and
 \left\{%
\begin{array}{ll}
    p_7=a_7w+b_7u \\
    p_8=a_8w+b_8u \\
    p_9=a_9w+b_9u,  \\
\end{array}%
\right.
\end{eqnarray}
Similar to Proposition 3.3, one can easily show, following the proof
of Theorem 4.1, that
\begin{proposition}
The nine points $p_1,p_2,\cdots,p_9$ lie on a cubic which differs
from $a\cdot b\cdot c=0$ if and only if
\begin{eqnarray}
\frac{ b_1b_2b_3}{a_1a_2a_3}\cdot \frac{b_4b_5b_6}{a_4a_5a_6}\cdot
\frac{b_7b_8b_9}{a_7a_8a_9}=-1,
\end{eqnarray}
holds.
\end{proposition}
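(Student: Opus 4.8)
The plan is to mimic the structure already used for Theorem 3.2 and its companion Proposition 3.4, since Proposition 4.2 is the cubic analogue of the conic statement. The statement asserts that nine points, three on each of the lines $a,b,c$, lie on a cubic distinct from $a\cdot b\cdot c$ exactly when the product of the three triple characteristic ratios equals $-1$. I would first fix the projective frame by taking $u=(1,0,0)$, $v=(0,1,0)$, $w=(0,0,1)$, so that the coordinate vectors of the nine points are read off directly from the expressions in (4.1): $p_i=(a_i,b_i,0)$ for $i=1,2,3$, $p_i=(0,a_i,b_i)$ for $i=4,5,6$, and $p_i=(b_i,0,a_i)$ for $i=7,8,9$. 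Here the lines $a,b,c$ are the coordinate lines $z=0$, $x=0$, $y=0$ respectively.

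The core of the argument is a dimension/determinant computation. A cubic is a homogeneous ternary form with ten coefficients, so the space of cubics vanishing at the nine prescribed points is the kernel of a $9\times 10$ matrix whose rows are the monomial evaluations at the $p_i$. The nine points always lie on the degenerate cubic $a\cdot b\cdot c = xyz = 0$, which gives one guaranteed element of the kernel. The points lie on a second, genuinely distinct cubic precisely when the kernel has dimension at least two, i.e. when the $9\times 10$ evaluation matrix has rank at most $8$. The plan is therefore to show that this rank drop is governed by a single scalar condition, and that the condition is exactly equation (4.2). I expect this to reduce, after deleting the column corresponding to the $xyz$ monomial (which is killed by all nine points), to the vanishing of a $9\times 9$ determinant built from the remaining nine monomials evaluated on the three groups of points; because each group lies on a coordinate line, each row has only four nonzero entries, giving the matrix a block structure that should factor cleanly.

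The main obstacle will be extracting the clean product formula from that $9\times 9$ determinant. The determinant will factor into a product of Vandermonde-like quantities in the ratios $b_i/a_i$ within each line-group times a cross term linking the three groups; carrying out the multilinear expansion carefully and identifying the surviving terms is where the real work lies. The cleanest route is probably to change coordinates on each line to the ratios $t_i=b_i/a_i$ and recognize that the three points on a coordinate line force the complementary cubic to interpolate three values, so that the rank-deficiency condition becomes a symmetric relation among the three Vandermonde determinants; the $-1$ on the right-hand side should emerge from the same sign bookkeeping that produced $-1$ in Proposition 3.1 and $+1$ in Theorem 3.2, reflecting that a cubic is an odd-degree curve.

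Alternatively, and more in the spirit of the excerpt, I would invoke Theorem 4.1 directly: since $\mathcal{K}_3(\Gamma_3)=-1$ for any cubic, the necessity of (4.2) is immediate whenever the nine points lie on such a cubic, because the left side of (4.2) is by Definition 3.9 exactly $\mathcal{K}_3$ evaluated via the lines $a,b,c$. The harder direction is sufficiency: assuming (4.2), I must produce a cubic through all nine points other than $a\cdot b\cdot c$. Here I would argue that the characteristic-number constraint cuts out precisely the one-codimensional locus in the configuration space on which the evaluation matrix drops rank, matching the degrees of freedom; this amounts to checking that the scalar $\mathcal{K}_3=-1$ condition and the rank-$\le 8$ condition define the same hypersurface, which follows once both are exhibited as the vanishing of the same irreducible polynomial in the $a_i,b_i$. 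The delicate point will be confirming that the second cubic is genuinely distinct from the coordinate-triangle cubic rather than a multiple of it, which is guaranteed as long as the nine points do not all lie on $xyz=0$ in a degenerate way — precluded by the hypothesis that the cubic differs from $a\cdot b\cdot c$.
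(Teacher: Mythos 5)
Your proposal diverges completely from the paper's own argument, so let me first record what the paper actually does: Proposition 4.2 is obtained by dualizing the configuration $\{a,b,c,u,v,w,p_1,\dots,p_9\}$ into the Morgan--Scott type partition $\Delta_{MS}^2$ and then chaining two characterizations of the singularity of the spline space $S_3^2(\Delta_{MS}^2)$ --- Theorem 5.8 (singular iff the nine dual points lie on a cubic different from $l_a\cdot l_b\cdot l_c$) and Theorem 5.7 with $\mu=2$ (singular iff the product of ratios equals $(-1)^{3}$). Since both of those are already stated as equivalences, both directions of the proposition come for free. Your first route (the $9\times10$ evaluation matrix, deleting the identically-zero $xyz$ column, and characterizing the existence of a second cubic by the vanishing of the resulting $9\times 9$ determinant) is a legitimate and more self-contained alternative; in fact it is essentially the transpose of the rank argument buried inside the paper's proof of Theorem 5.8, stripped of the spline language. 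Its weakness is that the factorization of that determinant into Vandermonde factors times $\bigl(\prod_{i=1}^9 a_i + \prod_{i=1}^9 b_i\bigr)$ \emph{is} the entire content of the proposition, and you explicitly defer it ("where the real work lies"), appealing only to a sign heuristic. The computation does go through (Laplace expansion along each trio of rows, whose supports overlap only in the columns of $x^3,y^3,z^3$), but until it is carried out you have restated the claim, not proved it. You would also need to dispose of the degenerate cases where the Vandermonde factors vanish (coincident points on a line) or where some $a_i$ or $b_i$ is zero.

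The genuine gap is in your second route. Necessity is fine: Theorem 4.1 gives $\mathcal{K}_3(\Gamma_3)=-1$, which is (4.2) by Definition 3.9. But for sufficiency you assert that the locus $\mathcal{K}_3=-1$ and the rank-$\le 8$ locus "define the same hypersurface, which follows once both are exhibited as the vanishing of the same irreducible polynomial" --- that exhibition is precisely what has to be done, and nothing in your plan produces it; as written the step is circular. The standard repair is elementary and avoids any irreducibility claim: assuming (4.2), the cubics through $p_1,\dots,p_8$ form a space of dimension at least $10-8=2$, so it contains a cubic $\Gamma$ not proportional to $a\cdot b\cdot c$; provided $\Gamma$ does not contain $c$ as a component (a case to be handled separately, e.g.\ by perturbing the choice of the eight points or noting $\Gamma=c\cdot Q$ forces six points on a conic and finishes via Theorem 3.2), $\Gamma$ meets $c$ in $p_7,p_8$ and a third point $p_9'$, and applying Theorem 4.1 to $\Gamma$ forces $[w,u;p_7,p_8,p_9']$ to equal the value that (4.2) prescribes for $[w,u;p_7,p_8,p_9]$; since the characteristic ratio determines the third point on $c$ uniquely, $p_9'=p_9$ and $\Gamma$ is the required cubic. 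Without some such argument (or the completed determinant computation of your first route, or the paper's Theorems 5.7--5.8), the "if" direction of the proposition is not established.
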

Now, from Proposition 4.2, we provide in the following a new
generalization of the Pascal theorem to cubic.
\begin{theorem}
For any given 9 intersections between a cubic $\Gamma_3$ and any
three lines $a,b,c$ with no common zero, none of them is a component
of $\Gamma_3$, then the six points consisting of the three points
determined by the Pascal mapping applied to any six points (no three
points of which are collinear) among those 9 intersections as well
as the remaining three points of those 9 intersections must lie on a
conic.
\end{theorem}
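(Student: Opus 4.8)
The plan is to reduce the statement to the conic criterion of Theorem~3.2 and Proposition~3.3 by a direct coordinate computation that mirrors the proof of the Pascal theorem given above. First I would normalise coordinates, setting $u=<c,a>=(1,0,0)$, $v=<a,b>=(0,1,0)$ and $w=<b,c>=(0,0,1)$, so that $a,b,c$ become the coordinate lines $z=0$, $x=0$, $y=0$ and the nine intersection points acquire the shapes in (4.1). The first observation is that the hypothesis ``no three of the chosen six points are collinear'' forces exactly two of them to lie on each of $a,b,c$: each line carries three of the nine points, and at most two may be taken from one line without producing a collinear triple, so $6=2+2+2$ is forced and the three leftover points are one per line. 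After relabelling I may assume the chosen points are $p_1,p_2$ on $a$, $p_4,p_5$ on $b$, $p_7,p_8$ on $c$, with $p_3,p_6,p_9$ left over, and I feed the six to the Pascal mapping of Definition~3.10 in the order for which the joining lines $(p_1p_2)$, $(p_4p_5)$, $(p_7p_8)$ defining the base points coincide with $a,b,c$ respectively; one then checks that this ordering returns precisely the base points $u,v,w$ fixed above.

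The decisive structural point is that each $\Phi$-image lands back on one of the three given lines: $q_1$ lies on $a$, $q_2$ on $c$, and $q_3$ on $b$, because one of the two lines defining each $q_i$ is respectively $a$, $c$, or $b$. Since the characteristic mapping attached to $q_i$ is taken with respect to the two vertices spanning that very line, each image $\chi(q_i)$ stays on the same line; hence $\chi_{(u,v)}(q_1)$ joins the leftover point $p_3$ on $a$, $\chi_{(v,w)}(q_3)$ joins $p_6$ on $b$, and $\chi_{(w,u)}(q_2)$ joins $p_9$ on $c$. Thus the six output points again form a two-per-line configuration with respect to $a,b,c$ and the base triangle $u,v,w$, and by Theorem~3.2 and Proposition~3.3 they lie on a conic if and only if the product of the three characteristic ratios taken line-by-line equals $1$.

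It remains to evaluate that product. Solving the two linear intersections gives, for instance, $q_1=(-b_5b_7,\,a_5a_7,\,0)$, so $[u,v;q_1]=-a_5a_7/(b_5b_7)$, and the reflexivity $\chi_{(u,v)}\circ\chi_{(u,v)}=I$ (equivalently $[u,v;q_1,\chi(q_1)]=1$) turns $[u,v;\chi(q_1)]$ into its reciprocal; the analogous computations handle $q_2$ and $q_3$. Each of the three line-ratios then carries one sign and one leftover factor, so their product is
\[
(-1)^3\cdot\frac{b_1b_2b_3}{a_1a_2a_3}\cdot\frac{b_4b_5b_6}{a_4a_5a_6}\cdot\frac{b_7b_8b_9}{a_7a_8a_9}.
\]
By Proposition~4.2 the nine points lie on the cubic $\Gamma_3$ (which differs from $abc=0$), so the triple product equals $-1$ by (4.2); hence the whole expression is $(-1)\cdot(-1)=1$, and Proposition~3.3 places the six output points on a conic, as claimed.

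The main difficulty is not any single calculation but the bookkeeping around Definition~3.10: one must pin down the order of the six points so that the definition reproduces the base points $u,v,w=<c,a>,<a,b>,<b,c>$, track which image $\chi(q_i)$ pairs with which leftover point and in which base pair, and keep the three signs straight so that exactly one factor of $-1$ survives. One should also note that relabelling symmetry makes the argument independent of which six of the nine points are selected, and record the mild genericity needed so that the six output points have no three collinear, as required to invoke Proposition~3.3.
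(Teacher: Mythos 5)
Your proposal is correct and follows essentially the same route as the paper's own proof: normalize $u,v,w$ to the coordinate triangle, observe that each $q_i$ falls back onto one of the lines $a,b,c$, note that the characteristic mapping reciprocates the ratio so each line contributes one factor of $-1$ and the leftover $b_i/a_i$ factors, and then combine $(-1)^3$ with the cubic invariant $\mathcal{K}_3=-1$ from Theorem 4.1/Proposition 4.2 to get the conic criterion of Theorem 3.2/Proposition 3.3. Your extra bookkeeping (why the chosen six must split $2+2+2$ over the three lines, and which leftover point pairs with which $\chi(q_i)$) is a welcome clarification of details the paper leaves implicit, but it is the same argument.
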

\begin{proof}
Let $\{p_1,p_2,p_7\}=\Gamma_3\bigcap a$,
$\{p_3,p_4,p_8\}=\Gamma_3\bigcap b$,
$\{p_5,p_6,p_9\}=\Gamma_3\bigcap c$, and $u=<c,a>,v=<a,b>,w=<b,c>$.
Without loss of generality, we assume that $u=(1,0,0),v=(0,1,0)$ and
$w=(0,0,1)$. It is shown in Theorem 4.1 that those 9 points
$\{p_i\}_{i=1}^9 \in \mathbb{P}^2$ lying on a cubic implies
$\mathcal {K}_3(\Gamma_3)=-1$, or equivalently, (4.2) holds. Notice
that
\begin{center}
$
\begin{array}{ll}
q_1=<(p_1,p_2),(p_{4},p_{5})>=(b_{4}b_{5},-a_{4}a_{5},0)=b_{4}b_{5}u-a_{4}a_{5}v,\\
q_2=<(p_2,p_{3}),(p_{5}p_{6})>=(a_2a_{3},0,-b_2b_{3})=-b_2b_{3}w+a_2a_{3}u,\\
q_3=<(p_1,p_{6}),(p_{3},p_{4})>=(0,-b_1b_{6},a_1a_{6})=-b_1b_{6}v+a_1a_{6}w,\\
\end{array}
$
\end{center}
So applying the Pascal mapping on $p_1,p_2,p_3,p_4,p_5,p_6$, we have
$$\{\chi_{(u,v)}(q_1),\chi_{(w,u)}(q_2),\chi_{(v,w)}(q_3)\}=
\{-a_{4}a_{5}u+b_{4}b_{5}v, a_2a_{3}w-b_2b_{3}u, a_1a_{6}v-b_1b_{6}w
\}.$$ Since (4.2) is equivalent to
\begin{eqnarray}
(\frac{
b_4b_5}{-a_4a_5})\frac{b_7}{a_7}\cdot(\frac{-b_1b_6}{a_1a_6})\frac{b_8}{a_8}\cdot(\frac{-b_2b_3}{a_2a_3})\frac{b_9}{a_9}
=1.
\end{eqnarray}
Thus by Theorem 3.2 and Proposition 3.3, the six points
$\{\chi_{(u,v)}(q_1),$ $ \chi_{(w,u)}(q_2),$ $\chi_{(v,w)}(q_3),
p_7,p_8,p_9\}$ must lie on a conic.
\end{proof}
\begin{figure}[ht]
	\begin{center}
\includegraphics{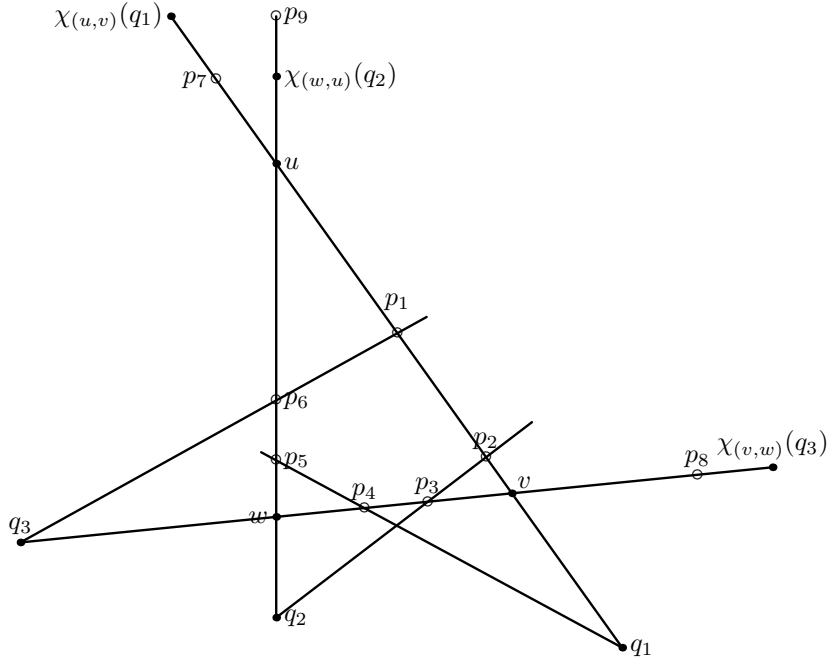}
\end{center}
\label{pascaltocubic} \caption{Generalization of Pascal Theorem}
\end{figure}
Theorem 4.3 implies that if $p_1,p_2,\cdots,p_9$ are intersection
points between a cubic and and any three distinct lines where non of
them is a component of the cubic (see Fig. 5), the three points
$\chi_{(u,v)}(q_1),\chi_{(w,u)}(q_2),\chi_{(v,w)}(q_3)$ along with
$p_7,p_8,p_9$ will lie on a conic. Obviously, this is an intrinsic
property of cubic!

Here, let us give an example to illustrate the Pascal type theorem
4.3. Let a cubic $\Gamma_3$ be given by
\begin{eqnarray*}
&&-1120x^3+560x^2y-60xy^2 +1008y^3-450xyz
+1200y^2z+580xz^2-1514yz^2\\
&&-729z^3=0,
\end{eqnarray*}
and three lines $a: x+z=0$, $b: -y+z=0$ and $c: -x+z=0$ be given.
Then the 9 intersections between $\Gamma_3$ and $a,b,c$ are
\begin{align*}
p_1&=(-4,-1,4), & p_2&=(-1,-\frac{3}{2},1),& p_3&=(\frac{1}{4},1,1),\\
p_4&=(-\frac{1}{4},1,1),& p_5&=(1,-\frac{3}{2},1),& p_6&=(1,-\frac{3}{4},1),\\
p_7&=(2,-1,-2),& p_8&=(\frac{1}{2},1,1),& p_9&=(1,\frac{47}{42},1),
\end{align*}
and $u=(0,-1,0), v=(-1,1,1), w=(1,1,1,)$. By direct computation, we
have
\begin{align*}
q_1&=<(p_1,p_2),(p_4,p_5)>=(-1,\frac{5}{2},1),\\
q_2&=<(p_2,p_3),(p_5,p_6)>=(1,\frac{5}{2},1),\\
q_3&=<(p_1,p_6),(p_3,p_4)>=(-6,1,1)
\end{align*}
and consequently
$$\chi_{(u,v)}(q_1)=(-1,\frac{5}{3},1), \chi_{(w,u)}(q_2)=(1,\frac{5}{3},1),\chi_{(v,w)}(q_3)=(6,1,1).$$
It is easy to verify that the six points $\chi_{(u,w)}(q_1),
\chi_{(v,u)}(q_2),\chi_{(w,v)}(q_3)$ as well as $p_7,p_8,p_9$ lie on
a conic:
$$4x^2+39xy-126y^2-65xz+312yz-174z^2=0.$$

In general, for algebraic curves of degree $n (n\geq 3)$, we have
proved the invariant of algebraic curves and the Pascal type theorem
to higher degrees. They are listed in the paper without proofs.
\begin{theorem} For any algebraic curve $\Gamma_n$ of degree $n$, its
characteristic number $\mathcal {K}_n(\Gamma_n)$ is always equal to
$(-1)^n.$
\end{theorem}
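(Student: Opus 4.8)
The plan is to prove the identity by a direct computation in a well-chosen coordinate frame, bypassing the spline machinery used for the cases $n=1,2,3$ and exploiting the fact that the characteristic ratio along each line is controlled by only two coefficients of the defining polynomial. First I would normalize the configuration. Since $u=<c,a>$, $v=<a,b>$, $w=<b,c>$ are the three non-collinear vertices of the triangle cut out by $a,b,c$, they are linearly independent, so I may choose homogeneous coordinates in which $u=(1,0,0)$, $v=(0,1,0)$, $w=(0,0,1)$; then $a,b,c$ become the coordinate lines $z=0$, $x=0$, $y=0$. The one thing to check at this stage is that $\mathcal{K}_n(\Gamma_n)$ really is insensitive to the choice of representatives for $u,v,w$: each vertex occurs in exactly two of the three characteristic ratios, once in a ``numerator'' role and once in a ``denominator'' role, so rescaling a representative multiplies one factor by $\alpha^n$ and another by $\alpha^{-n}$, and the product is unchanged. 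This legitimizes evaluating $\mathcal{K}_n(\Gamma_n)$ in the standard frame for an arbitrary $P(x,y,z)=\sum_{i+j+k=n}c_{ijk}x^iy^jz^k$.

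Next I would restrict $P$ to each coordinate line and read the characteristic ratio off the two extreme coefficients. On $a:\ z=0$ a point of $\Gamma_n\cap a$ has the form $(a_i,b_i,0)$, so the pairs $(a_i:b_i)$ are exactly the roots of the binary form $P(x,y,0)$; factoring $P(x,y,0)=\lambda\prod_i(b_ix-a_iy)$ and comparing the coefficients of $x^n$ and $y^n$, which are $c_{n00}$ and $c_{0n0}$, gives $[u,v;p^{(a)}]=(-1)^n\,c_{n00}/c_{0n0}$. The identical computation on $b:\ x=0$ and on $c:\ y=0$ produces $[v,w;p^{(b)}]=(-1)^n\,c_{0n0}/c_{00n}$ and $[w,u;p^{(c)}]=(-1)^n\,c_{00n}/c_{n00}$. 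The only sign subtlety is to keep the order of the basis points straight in the last factor, where $u$ plays the denominator role and $w$ the numerator role, so that the vanishing linear factor on $c$ is $a_ix-b_iz$ rather than $b_ix-a_iz$.

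Forming the product, the three corner coefficients $c_{n00},c_{0n0},c_{00n}$ cancel cyclically and only the sign survives:
\[
\mathcal{K}_n(\Gamma_n)=(-1)^n\frac{c_{n00}}{c_{0n0}}\cdot(-1)^n\frac{c_{0n0}}{c_{00n}}\cdot(-1)^n\frac{c_{00n}}{c_{n00}}=(-1)^{3n}=(-1)^n .
\]
Because the answer does not involve the $c_{ijk}$ at all, this single computation simultaneously delivers three things: it shows $\mathcal{K}_n$ depends only on $n$ (recovering $-1,+1,-1$ for lines, conics and cubics and agreeing with the multiplicativity remark after Definition 3.9), it verifies that $\mathcal{K}_n$ is independent of the chosen lines $a,b,c$ as claimed there, and of course it yields the stated value $(-1)^n$.

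I expect the main obstacle to be not the algebra but the degenerate configurations that the genericity hypotheses are meant to exclude. If $\Gamma_n$ passes through a vertex, say $c_{n00}=0$ so that $u\in\Gamma_n$, then an extreme coefficient vanishes and the corresponding characteristic ratio becomes $0$ or $\infty$ on its own; similarly, if $\Gamma_n$ is tangent to one of the lines, the binary form acquires a repeated root. In these cases the clean telescoping above must be re-read through the limiting convention for multiple intersection points adopted in Definition 3.5. The cleanest way to handle this, I think, is to regard the three corner coefficients as polynomials in the coefficients of $P$, establish the identity on the Zariski-dense locus where $c_{n00}c_{0n0}c_{00n}\neq 0$ and all $n$ intersection points on each line are simple, and then extend to the boundary by continuity together with the limit definition of the characteristic ratio.
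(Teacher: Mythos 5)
Your argument is correct, and it is a genuinely different route from the paper's. The paper proves only the case $n=3$ (Theorem 4.1), and does so by dualizing the configuration of the three lines, the three vertices and the nine intersection points into a Morgan--Scott type triangulation $\Delta_{MS}^2$, then invoking the singularity theory of the spline space $S_3^2(\Delta_{MS}^2)$: Theorem 5.8 converts ``the nine dual points lie on a cubic'' into ``$\dim S_3^2(\Delta_{MS}^2)=11$'', and Theorem 5.7 converts that singularity into the algebraic identity $(-1)^{\mu+1}$ with $\mu=2$. For general $n$ the paper explicitly states the theorem \emph{without proof}. Your computation --- normalizing $u,v,w$ to the coordinate vertices so that $a,b,c$ become $z=0$, $x=0$, $y=0$, factoring each restricted binary form $P(x,y,0)=\lambda\prod_i(b_ix-a_iy)$ (and its analogues, with the correctly oriented factor $a_ix-b_iz$ on $c$), and reading each characteristic ratio as $(-1)^n$ times a ratio of corner coefficients that cancels telescopically --- is elementary, self-contained, and actually covers all $n$ at once, so it supplies the proof the paper omits. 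Your preliminary check that $\mathcal{K}_n$ is insensitive to rescaling the representatives of $u,v,w$ (each vertex entering one ratio to the power $+n$ and another to the power $-n$) is exactly the point needed to justify working in the standard frame, and your closing remark about degenerate configurations (a vertex on $\Gamma_n$, or tangency producing multiple roots) correctly identifies the only place where care is needed; the Zariski-density/continuity argument, combined with the limit convention in Definition 3.5, handles it. What the paper's approach buys is the thematic link to the instability of spline spaces and the duality principle, which is the point of the paper; what yours buys is a short, complete, degree-uniform proof that also makes the independence of $\mathcal{K}_n$ from the choice of $a,b,c$ (asserted in Definition 3.9) immediate, since the final expression contains no data of the curve or the lines at all.
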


With this invariant, we may formulate a Pascal type Theorem for
algebraic curves of higher degrees:
\begin{theorem}[Pascal type Theorem] Let $a,b,c$ be any three distinct
lines with no common zero in the projective plane, and
$\{p_i^{(a)}\}_{i=1}^n$, $\{p_i^{(b)}\}_{i=1}^{n}$,
$\{p_i^{(c)}\}_{i=1}^{n}$ be given $n$ points lying on $a,b$ and
$c$, respectively. Then those $3n$ points $\{p_i^{(a)}\}_{i=1}^n$,
$\{p_i^{(b)}\}_{i=1}^{n}$, $\{p_i^{(c)}\}_{i=1}^{n}$ lie on an
algebraic curve of degree $n$ if and only if the $3(n-1)$ points
consisting of the three points determined by the Pascal mapping
applied to any six points (no three points of which are collinear)
among those $3n$ intersections as well as the remaining $3(n-2)$
points of those $3n$ intersections must lie on an algebraic curve of
degree $n-1$ as well.
\end{theorem}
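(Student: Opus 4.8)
The plan is to prove Theorem 4.6 by induction on $n$, using the characteristic number $\mathcal{K}_n$ as the bridge between the algebraic and geometric statements, exactly as the cubic case (Theorem 4.3) does. The base cases $n=2$ (Theorem 3.2 together with Pascal's theorem, Theorem 3.11) and $n=3$ (Theorem 4.3) are already established, so the inductive step is the heart of the matter. Fix the three lines $a,b,c$ with $u=<c,a>$, $v=<a,b>$, $w=<b,c>$, and normalize $u=(1,0,0)$, $v=(0,1,0)$, $w=(0,0,1)$. Writing each of the $3n$ points in the basis form of (4.1), the key identity I would establish is that the $3n$ points lie on a degree-$n$ curve (other than $abc=0$) if and only if $\mathcal{K}_n(\Gamma_n)=(-1)^n$, which is precisely the content of the generalized invariant Theorem 4.5; this generalizes Proposition 4.2 and is the higher-degree analogue of the vanishing-determinant/spline-singularity argument used for Proposition 3.1 and Theorem 3.2.

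First I would set up the forward direction. Assume the $3n$ points lie on a degree-$n$ curve $\Gamma_n$ distinct from $abc=0$. Select any six of them, no three collinear, and apply the Pascal mapping $\Psi$ as in Definition 3.10 to produce the three points $\chi_{(u,v)}(q_1)$, $\chi_{(w,u)}(q_2)$, $\chi_{(v,w)}(q_3)$. As in the proof of Theorem 4.3, I would compute the characteristic-ratio contribution of these three Pascal-image points and show that, when combined with the characteristic ratios of the remaining $3(n-2)$ points, their product equals $(-1)^{n-1}$. The combinatorial mechanism is the same one that turns (4.2) into (4.3) in the cubic proof: the characteristic mapping $\chi$ sends a ratio $b/a$ to $a/b$, and the Pascal construction repackages the six selected points' six ratios into three ratios carried by the images. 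Concretely, the product over all $3n$ original points factors as $(-1)^n$ by Theorem 4.5; peeling off the six selected points and replacing them by their three Pascal images multiplies the running product by an overall sign, leaving exactly the value $(-1)^{n-1}$ for the $3(n-1)$ new points. By the inductive hypothesis (the degree $n-1$ case of the invariant, i.e.\ the ``only if'' direction of the statement at level $n-1$), this value $(-1)^{n-1}$ is both necessary and sufficient for those $3(n-1)$ points to lie on a curve of degree $n-1$, which proves the forward implication.

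For the converse I would reverse this chain: if the $3(n-1)$ points (three Pascal images plus $3(n-2)$ leftovers) lie on a degree-$(n-1)$ curve, then by the inductive hypothesis their characteristic-ratio product is $(-1)^{n-1}$; undoing the Pascal/characteristic-mapping substitution restores the product over the original $3n$ points to $(-1)^n$, whence Theorem 4.5 forces the $3n$ points onto a degree-$n$ curve. The delicate point is that the three Pascal-image points genuinely lie on lines through the prescribed pairs of the vertices $u,v,w$, so that their characteristic ratios with respect to the correct basic point-pairs are well defined and enter the product in the right slots; this is where the insistence (noted after Definition 3.10) on defining $u,v,w$ by the specific opposite-side intersections, rather than the alternative labeling, becomes essential for $n>3$.

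The main obstacle I expect is the inductive step's reliance on Theorem 4.5 itself, whose proof is only asserted and not given in the excerpt. The whole argument rests on the multiplicativity of $\mathcal{K}_n$ over components (the remark after Definition 3.9, $\mathcal{K}_n=\mathcal{K}_{n_1}\mathcal{K}_{n_2}$) and on the equivalence ``$3n$ points on a degree-$n$ curve $\iff$ characteristic-ratio product $=(-1)^n$.'' Establishing this equivalence in full generality is the genuinely hard part: for the line and conic it came from an explicit determinant and a spline-singularity theorem (Theorems 5.4--5.5), and for the cubic from Appendix 5.2, but for arbitrary $n$ one needs either a uniform linear-algebra computation showing that the appropriate $\binom{n+2}{2}\times\binom{n+2}{2}$ interpolation/collocation matrix is singular exactly when the ratio product equals $(-1)^n$, or a clean induction peeling off one line at a time via the component multiplicativity. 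Once that equivalence is in hand, the Pascal-mapping bookkeeping reduces, as above, to the same sign computation that already worked for $n=3$, and the theorem follows by a routine induction.
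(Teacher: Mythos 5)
The paper offers no proof of this statement to compare against: Theorems 4.4 and 4.5 are explicitly ``listed in the paper without proofs,'' so the only benchmark is the cubic template (Theorem 4.3) that your plan visibly generalizes. Measured against that template, your reduction is the right one and your central sign computation is correct: if the six selected points (necessarily two per line, since no three are collinear) contribute $S=\frac{b_1b_2}{a_1a_2}\cdot\frac{b_3b_4}{a_3a_4}\cdot\frac{b_5b_6}{a_5a_6}$ to the characteristic product, the three Pascal images contribute $\frac{b_4b_5}{-a_4a_5}\cdot\frac{-b_2b_3}{a_2a_3}\cdot\frac{-b_1b_6}{a_1a_6}=-S$, so the total product passes from $(-1)^n$ to $(-1)^{n-1}$, exactly as (4.2) passes to (4.3) in the cubic proof. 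One framing quibble: this is not genuinely an induction on the theorem itself. Once you grant the equivalence ``$3m$ points, $m$ per line, lie on a suitable degree-$m$ curve $\iff$ their characteristic product equals $(-1)^m$'' at the two levels $m=n$ and $m=n-1$, both directions of the theorem follow by a direct chain; the ``inductive hypothesis'' you invoke is that lemma, not the Pascal-type statement at level $n-1$.

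That equivalence is the genuine gap, and you identify it honestly: Theorem 4.4 is asserted without proof for $n>3$, and its converse (the general-$n$ analogue of Proposition 4.2) is nowhere even stated in the paper; both directions at both levels $n$ and $n-1$ are needed for the ``if and only if.'' So your proposal is a correct reduction of the theorem to that unproved lemma rather than a complete proof --- which, to be fair, is no less than the paper itself provides. A further caution that affects the truth of the statement as written: for $n\geq 4$, any $3n$ points distributed $n$ per line automatically lie on the degree-$n$ curve $a\cdot b\cdot c\cdot h=0$ for every $h$ of degree $n-3$ (and on $a\cdot b\cdot Q=0$ for a suitable $Q$, etc.), so ``lie on an algebraic curve of degree $n$'' is vacuous unless one excludes curves containing $a$, $b$ or $c$ as components; the cubic case only needs to exclude $abc=0$ itself. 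Your equivalence lemma, and hence the theorem, must be formulated with that exclusion, and neither your sketch nor the paper's statement does so.
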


In view of the simplicity of the invariant, some known results of
algebraic curves (see \cite{Walker50}, pp.123) can be easily
understood from our invariant (the characteristic number).
\begin{theorem}
If a line cuts a cubic in three distinct points, the residual
intersections of the tangents at these three points are collinear.
\end{theorem}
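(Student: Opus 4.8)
The plan is to run the characteristic-number identity of Theorem 4.1 with the three tangent lines themselves playing the role of the cutting lines $a,b,c$. Let $\ell$ meet $\Gamma_3$ in the distinct points $P_1,P_2,P_3$, and let $a,b,c$ be the tangents to $\Gamma_3$ at $P_1,P_2,P_3$ respectively; set $u=\langle c,a\rangle$, $v=\langle a,b\rangle$, $w=\langle b,c\rangle$. Because each tangent meets the cubic with intersection multiplicity $2$ at its point of tangency and in one further residual point, Bézout's theorem gives that $a$ cuts $\Gamma_3$ in $\{P_1,P_1,R_1\}$, that $b$ cuts it in $\{P_2,P_2,R_2\}$, and that $c$ cuts it in $\{P_3,P_3,R_3\}$, where $R_1,R_2,R_3$ are exactly the residual intersections whose collinearity we must prove, with $R_1\in a$, $R_2\in b$, $R_3\in c$.

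Writing the tangent points and the residual points in the bases furnished by $u,v,w$ on each line, Definition 3.9 expresses $\mathcal K_3(\Gamma_3)$ as the product of the three characteristic ratios $[u,v;P_1,P_1,R_1]\cdot[v,w;P_2,P_2,R_2]\cdot[w,u;P_3,P_3,R_3]$. Since each $P_i$ is counted twice, its characteristic ratio enters squared, so this product factors as the square of the product of the three single tangent-point ratios, multiplied by the product of the three residual-point ratios. By Theorem 4.1 the whole product equals $-1$.

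Next I would observe that $P_1,P_2,P_3$ are nothing but the three points in which the line $\ell$ is cut by $a,b,c$: indeed $P_i\in\ell$ and $P_i$ lies on the tangent at $P_i$, and $\ell\neq a,b,c$ since $\ell$ meets $\Gamma_3$ transversally, so $P_1=\langle\ell,a\rangle$, $P_2=\langle\ell,b\rangle$, $P_3=\langle\ell,c\rangle$. As these three points are collinear (they all lie on $\ell$), Proposition 3.1 gives that the product of their three tangent-point characteristic ratios equals $-1$; hence its square equals $1$ and drops out of the identity of the previous paragraph. What remains is precisely that the product of the three residual-point characteristic ratios equals $-1$. Since $R_1\in a$, $R_2\in b$, $R_3\in c$ are expressed in the very same bases $u,v$, $v,w$, $w,u$ as in Proposition 3.1, the converse half of that proposition (the stated necessary-and-sufficient condition for collinearity) forces $R_1,R_2,R_3$ to be collinear, which is the assertion.

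The main thing to be careful about is the treatment of the tangency multiplicity: I must justify that the doubly-counted point $P_i$ contributes the square of its single ratio, which is exactly the limit form of the characteristic ratio allowed in Definition 3.5, and is what makes Theorem 4.1 applicable to a line meeting the cubic non-transversally. I would also need the chosen configuration to be admissible for Definitions 3.5 and 3.9: that $a,b,c$ have no common zero (are not concurrent), that none of them is a component of $\Gamma_3$, and that no $P_i$ or $R_i$ falls on a vertex $u,v,w$, so that all the characteristic ratios are finite and nonzero. These hold for a cubic in general position, and the exceptional configurations---for instance a flex among the $P_i$ (where $R_i=P_i$) or concurrent tangents---are the only cases that would need a separate limiting or perturbation argument.
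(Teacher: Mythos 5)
Your proof is correct and follows essentially the same route as the paper: apply the characteristic-number identity $\mathcal{K}_3(\Gamma_3)=-1$ with the three tangents as the cutting lines, note that each tangency point enters with its ratio squared, use the collinearity of the tangency points (via Proposition 3.1) to make that squared factor equal to $1$, and conclude that the product of the residual-point ratios is $-1$, forcing collinearity. Your additional remarks on the limit form of the characteristic ratio at a double intersection and on excluding degenerate configurations are points the paper passes over silently, but they do not change the argument.
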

\begin{proof}
Let $l$ be a line cutting a given cubic $\Gamma_3$ at three points
$p_1,p_2,p_3$ and $l_1,l_2,l_3$ be the three tangents at these
points respectively. Denote by $q_1,q_2,q_3$ the residual
intersections between $\Gamma_3$ and $l_1,l_2,l_3$, respectively.
Let $u=<l_2,l_3>, v=<l_1,l_2>, w=<l_3,l_1>$. Then there are real
numbers $\{a_i,b_i,c_i,d_i\}_{i=1}^3$ such that
$p_1=a_1v+b_1w,p_2=a_2u+b_2v,p_3=a_3w+b_3u$ and $q_1=c_1v+d_1w,
q_2=c_2u+d_2v,q_3=c_3w+d_3u$. From Theorem 4.1 and Proposition 4.2,
we have
$$(\frac{b_1}{a_1})^2\frac{c_1}{d_1}\cdot(\frac{b_2}{a_2})^2\frac{c_2}{d_2}\cdot(\frac{b_3}{a_3})^2\frac{c_3}{d_3}=-1.$$
Since $p_1,p_2,p_3$ are collinear, then
$\frac{b_1}{a_1}\cdot\frac{b_2}{a_2}\cdot\frac{b_3}{a_3}=-1$. Hence,
we have
$\frac{c_1}{d_1}\cdot\frac{c_2}{d_2}\cdot\frac{c_3}{d_3}=-1$, and
those three points $q_1,q_2,q_3$ are collinear.
\end{proof}
\begin{theorem}
A line joining two flexes of a cubic passes through a third flexes.
\end{theorem}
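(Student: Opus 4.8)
The plan is to reduce the statement to Theorem 4.6 by reading ``flex'' through the residual-intersection language used there. Let $p_1,p_2$ be two distinct flexes of the cubic $\Gamma_3$, let $l=(p_1,p_2)$ be the line joining them, and let $p_3$ be the third point of $\Gamma_3\cap l$ guaranteed by Bezout's theorem; I may assume $p_1,p_2,p_3$ are distinct, since otherwise $l$ would already be a flex tangent and the claim is vacuous. Denote by $l_1,l_2,l_3$ the tangents to $\Gamma_3$ at $p_1,p_2,p_3$ and by $q_1,q_2,q_3$ the residual intersections of these tangents with $\Gamma_3$, exactly as in Theorem 4.6. The key elementary observation is that a point $p_i$ is a flex if and only if its tangent $l_i$ meets $\Gamma_3$ at $p_i$ with multiplicity $3$, i.e.\ if and only if the residual intersection coincides with the point itself: $q_i=p_i$. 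In the characteristic-ratio formalism this is the limit form of Definition 3.5, in which the triple contact contributes $(b_i/a_i)^3$.

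First I would set up the triangle $u=<l_2,l_3>,\ v=<l_1,l_2>,\ w=<l_3,l_1>$ as in the proof of Theorem 4.6 and write $p_1=a_1v+b_1w$, $p_2=a_2u+b_2v$, $p_3=a_3w+b_3u$ for the contact points and $q_1=c_1v+d_1w$, $q_2=c_2u+d_2v$, $q_3=c_3w+d_3u$ for the residual intersections. Since each $l_i$ is tangent at $p_i$, the cubic meets $l_i$ in $p_i$ (doubly) and $q_i$, so Theorem 4.1, $\mathcal K_3(\Gamma_3)=-1$, yields the identity
\[
\Bigl(\tfrac{b_1}{a_1}\Bigr)^2\tfrac{d_1}{c_1}\cdot\Bigl(\tfrac{b_2}{a_2}\Bigr)^2\tfrac{d_2}{c_2}\cdot\Bigl(\tfrac{b_3}{a_3}\Bigr)^2\tfrac{d_3}{c_3}=-1,
\]
which is precisely the relation exploited in Theorem 4.6. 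Writing $r_i=b_i/a_i$ and using that $p_1,p_2,p_3$ are collinear on $l$ (hence $r_1r_2r_3=-1$, by the collinearity criterion stated below Proposition 3.1), I would now feed in the hypotheses: $p_1,p_2$ being flexes means $q_1=p_1$ and $q_2=p_2$, that is $d_1/c_1=r_1$ and $d_2/c_2=r_2$. Substituting and replacing $r_1^3r_2^3=(r_1r_2)^3=-1/r_3^3$ collapses the identity to $d_3/c_3=r_3$, i.e.\ $q_3=p_3$. By the flex characterization this says exactly that $p_3$ is a flex.

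The main obstacle is not the algebra but making the residual/limit bookkeeping rigorous and excluding degenerate configurations. Concretely one must (i) justify that $l_1,l_2,l_3$ are three distinct lines with no common zero, none a component of $\Gamma_3$, so that Theorems 4.1 and 4.6 genuinely apply — here one uses that a flex tangent meets $\Gamma_3$ only at the flex and so cannot coincide with another tangent; and (ii) confirm that the exponents in the characteristic ratios are the correct limit forms for the double contact at each $p_i$ and the triple contact that emerges at $p_3$. A purely geometric shortcut is tempting: Theorem 4.6 gives $q_1,q_2,q_3$ collinear, whence $q_3$ lies on $l=(p_1,p_2)$ and so $q_3\in\Gamma_3\cap l=\{p_1,p_2,p_3\}$; but this leaves open the spurious possibilities $q_3=p_1$ or $q_3=p_2$, i.e.\ the tangent at $p_3$ passing through one of the flexes. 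The characteristic-ratio computation above is what cleanly rules these out, since it pins down $d_3/c_3=r_3$ and hence $q_3=p_3$ as the unique outcome. I would therefore present the characteristic-ratio identity as the backbone of the argument and use the geometric picture only as motivation.
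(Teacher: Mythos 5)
Your proposal is correct at the paper's level of rigor, but it takes a genuinely different route. The paper starts from \emph{three} flexes $p_1,p_2,p_3$, applies Theorem 4.1 with the three flex tangents as the cutting lines --- each tangent meets $\Gamma_3$ only at its flex, with multiplicity $3$, so the characteristic-number identity reads $(b_1/a_1)^3(b_2/a_2)^3(b_3/a_3)^3=-1$ --- extracts the real cube root to get $\frac{b_1}{a_1}\cdot\frac{b_2}{a_2}\cdot\frac{b_3}{a_3}=-1$, and concludes collinearity from Proposition 3.1. You instead start from \emph{two} flexes, take $p_3$ to be the third intersection of the line $(p_1,p_2)$ with the cubic, and run the Theorem 4.6 configuration (tangents plus residual intersections) together with the collinearity relation $r_1r_2r_3=-1$ to force $d_3/c_3=r_3$, i.e.\ $q_3=p_3$, so that $p_3$ is itself a flex. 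Your route buys three things: it proves the statement exactly as phrased without presupposing that a third flex exists; it avoids the cube-root extraction, which is the delicate step in the paper's argument (the relation $x^3=-1$ pins down $x=-1$ only over the reals, and indeed over $\mathbb{C}$ the nine flexes are not all collinear, so the paper's ``any three flexes are collinear'' reading is only safe in the real setting); and it makes explicit the characterization ``$p$ is a flex iff the residual intersection of its tangent is $p$ itself,'' which the paper uses silently. The cost is a longer computation and the degeneracy bookkeeping you flag in (i)--(ii), which the paper simply omits. One small discrepancy: the paper's Theorem 4.6 writes the residual factors as $c_i/d_i$ where you write $d_i/c_i$; your form is the one consistent with Definition 3.5, and either convention leads to the same conclusion $d_3/c_3=b_3/a_3$, so your argument stands.
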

\begin{proof}
Let $p_1,p_2,p_3$ be three flexes of a cubic, and $l_1,l_2,l_3$ be
the three tangents at these points. Let $u=<l_2,l_3>, v=<l_1,l_2>,
w=<l_3,l_1>$. Then there are real numbers $\{a_i,b_i\}_{i=1}^3$ such
that $p_1=a_1v+b_1w,p_2=a_2u+b_2v,p_3=a_3w+b_3u$. From Theorem 4.1,
we have
$(\frac{b_1}{a_1})^3\cdot(\frac{b_2}{a_2})^3\cdot(\frac{b_3}{a_3})^3=-1,$
hence $\frac{b_1}{a_1}\cdot\frac{b_2}{a_2}\cdot\frac{b_3}{a_3}=-1.$
Which implies $p_1,p_2,p_3$ are collinear.
\end{proof}
Similar to the proofs of Theorem 4.6 and Theorem 4.7, the following
theorem can be also easily proved by using the invariant that we
found.
\begin{theorem}
If a conic is tangent to a cubic at three distinct points, the
residual intersections of the tangents at these points are
collinear.
\end{theorem}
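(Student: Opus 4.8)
The plan is to mimic the proofs of Theorems 4.6 and 4.7, replacing the role played there by the collinearity of the three contact points with the role played here by their lying on a conic. First I would fix the data: let $p_1,p_2,p_3$ be the three contact points, let $l_1,l_2,l_3$ be the common tangent lines of the conic $C$ and the cubic $\Gamma_3$ at these points, and set $u=<l_2,l_3>,\ v=<l_1,l_2>,\ w=<l_3,l_1>$ to be the vertices of the triangle cut out by the tangents. Since $p_1$ and the residual intersection $q_1$ both lie on $l_1=(v,w)$, and similarly for the other two lines, there are scalars with $p_1=a_1v+b_1w,\ p_2=a_2u+b_2v,\ p_3=a_3w+b_3u$ and $q_1=c_1v+d_1w,\ q_2=c_2u+d_2v,\ q_3=c_3w+d_3u$, exactly the expansions used in the proof of Theorem 4.6.

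Next I would compute the characteristic number of the cubic. Because each $l_i$ is tangent to $\Gamma_3$ at $p_i$, Bezout's theorem gives $\Gamma_3\cap l_i=\{p_i,p_i,q_i\}$, the contact point counted twice. Feeding these intersections into Theorem 4.1, with the limit convention of Definition 3.5 accounting for the doubled point, yields
$$\Big(\tfrac{b_1}{a_1}\Big)^2\tfrac{c_1}{d_1}\cdot\Big(\tfrac{b_2}{a_2}\Big)^2\tfrac{c_2}{d_2}\cdot\Big(\tfrac{b_3}{a_3}\Big)^2\tfrac{c_3}{d_3}=\mathcal{K}_3(\Gamma_3)=-1.$$

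The decisive new step is to extract the factor $\prod_i (b_i/a_i)^2$ from the conic rather than from a line. Since $C$ is tangent to $\Gamma_3$ at each $p_i$, the line $l_i$ is simultaneously tangent to $C$ there, so $C$ meets $l_i$ in the double point $p_i$. Cutting the conic $C$ by the three tangent lines $l_1,l_2,l_3$ and invoking Theorem 3.2 (the characteristic number of a conic is $+1$) gives
$$\Big(\tfrac{b_1}{a_1}\Big)^2\Big(\tfrac{b_2}{a_2}\Big)^2\Big(\tfrac{b_3}{a_3}\Big)^2=\mathcal{K}_2(C)=1.$$
Dividing the cubic identity by this conic identity cancels the squared factors and leaves $\tfrac{c_1}{d_1}\tfrac{c_2}{d_2}\tfrac{c_3}{d_3}=-1$; by Proposition 3.1 this is precisely the condition for the residual points $q_1,q_2,q_3$ to be collinear, which is the assertion.

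I expect the only delicate point to be the rigorous treatment of the double intersection at each contact point: one must argue that the tangent to the cubic coincides with the tangent to the conic at $p_i$, so that $p_i$ is genuinely a double intersection of $l_i$ with \emph{both} curves, and that the coefficient identities of Theorems 3.2 and 4.1, established for simple intersections, persist for the doubled point through the limiting definition of the characteristic ratio in Definition 3.5. Once this degeneration is justified, the computation is a verbatim parallel of Theorems 4.6 and 4.7, the sole structural change being that the contact points now contribute $\prod_i (b_i/a_i)^2=1$ via the conic's characteristic number instead of via collinearity.
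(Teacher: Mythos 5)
Your proof is correct and is exactly the argument the paper intends: the paper omits the proof of this theorem, saying only that it is ``similar to the proofs of Theorem 4.6 and Theorem 4.7,'' and you supply precisely that argument, with the one genuinely new ingredient being that the factor $\prod_i (b_i/a_i)^2$ now equals $\mathcal{K}_2(C)=1$ by Theorem 3.2 (the conic cut by its three tangent lines in double points) rather than $-1$ from collinearity. Your basis conventions match those used in the paper's proof of Theorem 4.6, and your flagged delicate point (the shared tangent forcing a double intersection of $l_i$ with both curves, handled by the limit convention of Definition 3.5) is the right one.
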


\section{Appendix}

\subsection{Bivariate Spline Space over Triangulations}
It is well known that spline is an important approximation tool in
computational geometry, and it is widely used in CAGD, scientific
computations and many fields of engineering. Splines, i.e.,
piecewise polynomials, forms linear spaces that have a very simple
structure in univariate case. However, it is quite complicated to
determine the structure of a space of bivariate spline over
arbitrary triangulation.

Bivariate spline is defined as follows\cite{W_Sh_L_S94}:
\begin{definition}
Let $\Omega$ be a given planar polygonal region and $\Delta$ be a
triangulation or partition of $\Omega$, denoted by $T_i,
i=1,2,\cdots,V$, called cells of $\Delta$. For integer $k>\mu\geq
0$, the linear space
$$S_k^\mu (\Delta):=\{s\mid s|_{T_i}\in \bf{\mathbb{P}_k}, s\in C^{\mu}(\Omega), \forall T_i \in \Delta \}$$
is called the spline space of degree $k$ with smoothness $\mu$,
where $\bf{\mathbb{P}_k}$ is the polynomial space of total degree
less than or equal to $k$.
\end{definition}

From the Smoothing Cofactor method\cite{W_Sh_L_S94}, the fundamental
theorem on bivariate splines was established.
\begin{theorem}
$s(x,y)\in S_k^\mu (\Delta)$ if and only if the following conditions
are satisfied:
\begin{enumerate}
\item[1.] For each interior edge of $\Delta$, which is defined by
$\Gamma_i:l_i(x,y)=0,$ there exists a so-called smoothing cofactor
$q_i(x,y),$ such that
\begin{equation*}
p_{i1}(x,y)-p_{i2}(x,y)=l_i^{\mu+1}(x,y)q_i(x,y),
\end{equation*}
where the polynomials $p_{i1}(x,y)$ and $p_{i2}(x,y)$ are determined
by the restriction of $s(x,y)$ on the two cells $\Delta_{i1}$ and
$\Delta_{i2}$ with $\Gamma_i$ as the common edge and $q_i(x,y)\in
\mathbb{P}_{k-(\mu+1)}$.
\item[2.] For any interior vertex $v_j$ of $\Delta$, the following
conformality conditions are satisfied
\begin{equation}
\sum[l_i^{(j)}(x,y)]^{\mu+1}q_i^{(j)}(x,y)\equiv 0,
\end{equation}
where the summation is taken on all interior edges $\Gamma_i^{(j)}$
passing through $v_j$, and the sign of the smoothing cofactors
$q_i^{(j)}$ are refixed in such a way that when a point crosses
$\Gamma_i^{(j)}$ from $\Delta_{i1}$ to $\Delta_{i2}$, it goes around
$v_j$ counter-clockwisely.
\end{enumerate}
\end{theorem}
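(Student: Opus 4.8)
The plan is to show that global $C^\mu$ smoothness of a piecewise polynomial decomposes into purely local conditions supported on the edges and vertices of $\Delta$, and then to read off the two stated conditions from this decomposition. Throughout, $s$ is a single-valued function that restricts to a polynomial of degree $\le k$ on each cell; since polynomials are $C^\infty$ in the interior of a cell, the only loci where $C^\mu$ smoothness can fail are the interior edges and the interior vertices. I would therefore prove the ``if and only if'' in two steps: first a divisibility lemma governing smoothness across the interior of a single edge (which yields Condition 1), and then a telescoping argument around each interior vertex (which yields Condition 2). Both directions of the equivalence fall out of these two ingredients.

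For Condition 1 the key is the local lemma: two polynomials $p_{i1}$ and $p_{i2}$ join with $C^\mu$ smoothness across the line $\Gamma_i:l_i(x,y)=0$ if and only if $l_i^{\mu+1}$ divides $p_{i1}-p_{i2}$. To prove it I would make an affine change of coordinates sending $l_i$ to a coordinate function, say $X$, so that $\Gamma_i=\{X=0\}$, and expand the jump $d:=p_{i1}-p_{i2}=\sum_{m\ge 0} c_m(Y)\,X^m$. Along $\Gamma_i$ the tangential derivatives of $d$ vanish automatically once $d$ does, so matching to order $\mu$ is equivalent to the vanishing of the normal derivatives $\partial_X^m d|_{X=0}=0$ for $m=0,1,\dots,\mu$, i.e. to $c_0=c_1=\cdots=c_\mu=0$, i.e. to $X^{\mu+1}\mid d$. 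Transporting back gives $p_{i1}-p_{i2}=l_i^{\mu+1}q_i$, and comparing degrees ($\deg(p_{i1}-p_{i2})\le k$ against $\deg l_i^{\mu+1}=\mu+1$) forces $q_i\in\mathbb{P}_{k-(\mu+1)}$. This establishes the equivalence between $C^\mu$ matching across each edge interior and the existence of the smoothing cofactors of Condition 1.

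For Condition 2 I would fix an interior vertex $v_j$ and list, in counter-clockwise order, the $N$ edges $\Gamma_1^{(j)},\dots,\Gamma_N^{(j)}$ through $v_j$ together with the $N$ cells $T_1,\dots,T_N$ they separate, writing $s|_{T_r}=:P_r$. With the sign convention described in the statement, crossing $\Gamma_i^{(j)}$ produces the jump $P_i-P_{i-1}=[l_i^{(j)}]^{\mu+1}q_i^{(j)}$ (indices modulo $N$). Summing over one full circuit returns to the starting cell, so the jumps telescope to zero, giving $\sum_i [l_i^{(j)}]^{\mu+1}q_i^{(j)}\equiv 0$, which is exactly Condition 2. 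Conversely, if Condition 1 holds then $s$ is $C^\mu$ across every edge interior, and the telescoping identity (Condition 2) guarantees that the local representations around $v_j$ are mutually consistent, so no contradiction develops at the vertex itself; hence $s$ is $C^\mu$ on a full neighbourhood of $v_j$. Combining the cell interiors, the edge interiors, and the vertices then shows $s\in C^\mu(\Omega)$, i.e. $s\in S_k^\mu(\Delta)$.

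The hard part will be the divisibility lemma, and specifically the claim that $C^\mu$ matching across $\Gamma_i$ is equivalent to order-$(\mu+1)$ vanishing of the jump rather than to some weaker mixed-derivative condition; the clean way to see this is that a function vanishing on $\{X=0\}$ has all tangential ($Y$) derivatives vanishing there too, so only the pure normal derivatives impose constraints. A secondary subtlety is the reverse implication at a vertex: one must argue that edgewise $C^\mu$ continuity together with the conformality relation really does produce smoothness \emph{at} the vertex, which relies on the fact that each piece is a polynomial (hence real-analytic), so that agreement to order $\mu$ on each of the finitely many incident edges, combined with the single-valuedness enforced by Condition 2, pins down a single $C^\mu$ germ at $v_j$.
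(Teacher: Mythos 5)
The paper does not actually prove this statement: it is quoted as the fundamental theorem of the smoothing cofactor method with a citation to Wang et al., so there is no in-paper argument to compare yours against. That said, your sketch is the standard derivation and it is essentially correct: the divisibility lemma (local $C^\mu$ contact across $l_i=0$ is equivalent to $l_i^{\mu+1}\mid(p_{i1}-p_{i2})$) via a coordinate change and the observation that tangential derivatives of the jump vanish automatically is exactly the right key step, and the counter-clockwise telescoping around an interior vertex is the right source of the conformality condition. Two small points are worth tightening. First, smoothness is only required across the edge \emph{segment}, not the whole line $l_i=0$; you need the remark that a polynomial ($c_m(Y)$ in your notation) vanishing on a nondegenerate segment vanishes identically, which is what lets you pass from matching on the edge to divisibility by $X^{\mu+1}$. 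Second, your phrasing of the role of Condition 2 in the converse is slightly misleading: for an $s$ that is already given as a single-valued piecewise polynomial, Condition 2 is an automatic consequence of Condition 1 by your own telescoping identity, and $C^\mu$ smoothness at the vertex then follows just from the fact that all incident pieces share the same degree-$\mu$ Taylor data at $v_j$ (each $\partial^\alpha(P_r-P_{r-1})$, $|\alpha|\le\mu$, vanishes on $\Gamma_r^{(j)}\ni v_j$). The conformality condition earns its place in the theorem because in applications one runs the construction backwards -- prescribing cofactors and reassembling $s$ cell by cell -- and there Condition 2 is precisely the integrability constraint that makes the reassembly single-valued; this is what underlies the dimension formula $\dim S_k^\mu(\Delta)=\binom{k+2}{2}+\tau$ used later in the paper.
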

From Theorem 5.2, the dimension of the space $S_k^\mu(\Delta)$ can
be expressed as
$$\dim S_k^\mu(\Delta)=\left(\begin{array}{c} k+2\\ 2 \end{array}\right)+\tau,$$
where $\tau$ is the dimension of the linear space defined by the
conformality conditions (5.1).

However, for an arbitrary given triangulation, the dimension of
these spaces depends not only on the topology of the triangulation,
but also on the geometry of the triangulation. In general cases, no
dimension formula is known. We say that a triangulation is {\em
singular} to $S_k^\mu(\Delta)$ if the dimension of the spline space
depends on, in additional to the topology of the triangulation, the
geometric position of the vertices of $\Delta$, and
$S_k^\mu(\Delta)$ is singular when its dimension increases according
to the geometric property of $\Delta$. Hence, the singularity of
multivariate spline spaces is an important object that is inevitable
in the research of the structure of multivariate spline spaces. For
example, Morgan and Scott's triangulation
$\Delta_{MS}$(\cite{M-S75}, see Fig. 6) is singular to
$S_2^1(\Delta_{MS})$. That is to say that the dimension of the space
$S_2^1(\Delta_{MS})$ is 6 in general but it increases to 7 when the
position of the inner vertices satisfy certain conditions.

\begin{figure}[h]
\begin{minipage}{0.48\textwidth}
\includegraphics[width=1.0\textwidth]{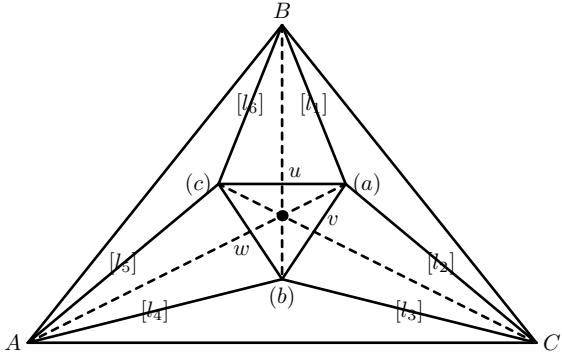}\\
\label{MS}\caption{Morgan-Scott triangulation}
\end{minipage}
\begin{minipage}{0.5\textwidth}
While the singularity of multivariate spline over any triangulation
has not been completely settled, many results on the structure of
multivariate spline space in the past 30 years can be found in many
of references {\cite{Alf85,Alf87,Alf_Sch87,Alf_P_Sch87,Alf_P_
Sch871,Bil88,C_Wang83,C_Wang831,C_S_W85,Dah_Mic89,Dah_Mic96,Dien90,Sch84,
Sch79,R_H_Lu88,W_Sh_L_S94,Luo_Wang05}}. For Morgan-Scott's
triangulation, Shi\cite{shixq91} and Diener\cite{Dien90}
independently obtained the geometric significance of the necessary
and sufficient condition of $\dim S_2^1(\Delta_{MS})=7$,
respectively, and an equivalent geometric necessary and sufficient
condition of singularity of $S_2^1(\Delta_{MS})$ from the viewpoint
of projective geometry was obtained in \cite{Du03}.
\end{minipage}
\end{figure}

Now, we take an example for $S_2^1(\Delta_{MS})$ to intuitively
understand Theorem 5.2. Let $l_i: \alpha_i x+\beta_i y+\gamma_i z=0
\ \ (i=1,2,\cdots,6)$, $u:\alpha_u x+\beta_u y+\gamma_u
z=0$,$v:\alpha_v x+\beta_v y+\gamma_v z=0$ and $w:\alpha_w x+\beta_w
y+\gamma_w z=0$ in Morgan-Scott triangulation shown in Fig. 6. From
Theorem 5.2, the global conformality condition in
$S_2^1(\Delta_{MS})$ is
\begin{eqnarray}
\left\{\begin{array}{ll}
\lambda_1l_1^2+\lambda_2l_2^2+\lambda_uu^2+\lambda_vv^2=0,\\
\lambda_3l_3^2+\lambda_4l_4^2-\lambda_vv^2+\lambda_ww^2=0,\\
\lambda_5l_5^2+\lambda_6l_6^2-\lambda_ww^2-\lambda_uu^2=0,
\end{array}
\right.
\end{eqnarray}
where all letters of $\lambda's$ are undetermined real constants.
Then the $\dim S_2^1(\Delta_{MS})=6+\tau$, where $\tau$ is the
dimension of the linear space defined by (5.2). However, the
structure of $S_2^1(\Delta_{MS})$ depends on the geometric positions
of the inner vertices $a,b$ and $c$, which can be obviously shown
from the following conclusions.
\begin{theorem}[{\cite{shixq91}}] The
spline space $S_2^1(\Delta_{MS})$ is singular (i.e. $\dim
S_2^1(\Delta_{MS})=7$) if and only if $Aa,Bb,Cc$ are concurrent,
otherwise $\dim S_2^1(\Delta_{MS})=6$(see Fig.6).
\end{theorem}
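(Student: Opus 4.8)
The plan is to compute directly the dimension $\tau$ of the solution space of the conformality system (5.2) and to show that $\tau$ jumps from $0$ to $1$ precisely when $Aa,Bb,Cc$ are concurrent. Since Theorem 5.2 gives $\dim S_2^1(\Delta_{MS})=6+\tau$, this settles both the value $6$ (generic) and $7$ (singular) at once.

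First I would choose projective coordinates adapted to the inner triangle. Writing $a,b,c$ for the inner vertices and $u=(c,a),v=(a,b),w=(b,c)$ for the inner edges (the notation of Section 3), I place $a=(1,0,0),b=(0,1,0),c=(0,0,1)$, so that $u,v,w$ become the coordinate lines $y=0,z=0,x=0$ and $u^2=y^2,v^2=z^2,w^2=x^2$. Each interior spoke through an inner vertex is then a linear form missing one coordinate: the two edges through $a$ are $l_i=\beta_i y+\gamma_i z$ $(i=1,2)$, the two through $b$ are $l_i=\alpha_i x+\gamma_i z$ $(i=3,4)$, and the two through $c$ are $l_i=\alpha_i x+\beta_i y$ $(i=5,6)$. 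Substituting these into (5.2) and comparing the coefficients of the six monomials $x^2,y^2,z^2,xy,yz,zx$ turns each of the three polynomial identities into scalar equations in $\lambda_1,\dots,\lambda_6,\lambda_u,\lambda_v,\lambda_w$.

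Next I would perform the elimination. The three ``mixed'' coefficients (those of $yz,zx,xy$) give $\lambda_1\beta_1\gamma_1+\lambda_2\beta_2\gamma_2=0$ and its two cyclic analogues, which express $\lambda_2,\lambda_4,\lambda_6$ through $\lambda_1,\lambda_3,\lambda_5$. The six ``pure-square'' coefficients determine $\lambda_u,\lambda_v,\lambda_w$ and, after eliminating them, leave three compatibility relations among $\lambda_1,\dots,\lambda_6$. Combining the two eliminations collapses the whole system to a single homogeneous $3\times3$ system $M(\lambda_1,\lambda_3,\lambda_5)^{\mathsf T}=0$ with the cyclic zero pattern
\[
M=\begin{pmatrix} \dfrac{\beta_1 D_a}{\gamma_2} & 0 & -\dfrac{\beta_5 D_c}{\alpha_6}\\[4pt] -\dfrac{\gamma_1 D_a}{\beta_2} & \dfrac{\gamma_3 D_b}{\alpha_4} & 0\\[4pt] 0 & -\dfrac{\alpha_3 D_b}{\gamma_4} & \dfrac{\alpha_5 D_c}{\beta_6}\end{pmatrix},
\]
where $D_a=\beta_1\gamma_2-\gamma_1\beta_2$, $D_b=\gamma_3\alpha_4-\alpha_3\gamma_4$, $D_c=\alpha_5\beta_6-\beta_5\alpha_6$ are nonzero since the two spokes at each inner vertex are distinct lines. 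Then $\tau=\dim\ker M=3-\operatorname{rank}M$, so $\tau\ge1$ iff $\det M=0$, and $\tau$ equals exactly $1$ there because $M$ still carries a nonvanishing $2\times2$ minor.

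The crux is the determinant. Expanding gives $\det M=\dfrac{D_aD_bD_c}{\alpha_6\beta_2\gamma_4\,\gamma_2\alpha_4\beta_6}\big(\beta_1\beta_2\gamma_3\gamma_4\alpha_5\alpha_6-\alpha_3\alpha_4\beta_5\beta_6\gamma_1\gamma_2\big)$, so, the $D$'s being nonzero, $\det M=0$ iff $\beta_1\beta_2\gamma_3\gamma_4\alpha_5\alpha_6=\alpha_3\alpha_4\beta_5\beta_6\gamma_1\gamma_2$. To finish I would identify the outer vertices as pairwise intersections of the spokes — e.g. $A=l_4\cap l_5$, $B=l_1\cap l_6$, $C=l_2\cap l_3$ under the natural labelling in which $A$ is joined to $b,c$ (and cyclically) — compute the lines $Aa,Bb,Cc$ (each through a coordinate vertex, hence with a two-term equation), and verify that the vanishing of their $3\times3$ coefficient determinant is exactly $\beta_1\beta_2\gamma_3\gamma_4\alpha_5\alpha_6-\alpha_3\alpha_4\beta_5\beta_6\gamma_1\gamma_2=0$. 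This matching of the algebraic factor of $\det M$ with the geometric concurrency determinant is the one genuinely substantive step; the surrounding reductions are bookkeeping. I expect the main care to go into the nondegeneracy hypotheses (all $D$'s and the pivot products nonzero, no spoke degenerating onto an inner edge) that legitimize the eliminations and guarantee the rank drops by exactly one, so that $\tau$ is $1$ and not larger in the singular case.
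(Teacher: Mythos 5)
The paper does not actually prove this statement: Theorem 5.3 is quoted from Shi's 1991 paper, and the surrounding text only remarks that its equivalence with Theorems 5.4 and 5.5 follows from duality and the Pascal theorem. Your direct computation is therefore necessarily a different route, and it checks out. I verified the reduction: with $a=(1,0,0)$, $b=(0,1,0)$, $c=(0,0,1)$ the mixed-monomial coefficients of (5.2) give $\lambda_2=-\lambda_1\beta_1\gamma_1/(\beta_2\gamma_2)$ and its cyclic analogues, eliminating $\lambda_u,\lambda_v,\lambda_w$ from the pure-square coefficients yields exactly your matrix $M$, and
$$\det M=\frac{D_aD_bD_c}{\gamma_2\alpha_4\beta_6\,\beta_2\gamma_4\alpha_6}\bigl(\beta_1\beta_2\gamma_3\gamma_4\alpha_5\alpha_6-\alpha_3\alpha_4\beta_5\beta_6\gamma_1\gamma_2\bigr),$$
whose vanishing coincides with the $3\times 3$ concurrency determinant of $Aa,Bb,Cc$ under the cyclic labelling $A=\langle l_4,l_5\rangle$, $B=\langle l_6,l_1\rangle$, $C=\langle l_2,l_3\rangle$ (and the condition is symmetric under swapping the two spokes at each inner vertex, so the conclusion is independent of which consistent labelling the figure uses). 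The nondegeneracy you flag is automatic: a vanishing coefficient such as $\beta_1=0$ would force the spoke $l_1$ to coincide with an inner edge, which cannot happen in a genuine Morgan--Scott triangulation, so all pivots and all $D$'s are nonzero and $\operatorname{rank}M\geq 2$ always, giving $\tau\in\{0,1\}$ as claimed. What your approach buys is a self-contained proof of Theorem 5.3 that the paper lacks; moreover, since $\det M=0$ is literally condition (5.4), the same computation proves Theorem 5.5, whereas the paper's intended (unwritten) derivation would instead pass through the dual configuration of six points on a conic (Theorem 5.4) and invoke the Pascal theorem to connect the geometric and algebraic criteria.
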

\begin{theorem}[{\cite{Du03}}] Let $l_i(x,y,z)=\alpha_i x+\beta_i y+\gamma_i
z=0$ $(i=1,2,\cdots,6)$, then the spline space $S_2^1(\Delta_{MS})$
is singular (i.e. $\dim S_2^1(\Delta_{MS})=7$) if and only if 6
points $\{(\alpha_i, \beta_i, \gamma_i)\}_{i=1}^6$ lie on a conic,
otherwise $\dim S_2^1(\Delta_{MS})=6$.
\end{theorem}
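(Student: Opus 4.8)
The plan is to reduce the global conformality system (5.2) to a $6\times 6$ homogeneous linear system in $\lambda_1,\dots,\lambda_6$, and then to recognize its coefficient matrix as essentially the transpose of the matrix whose singularity expresses that the six points lie on a conic. By Theorem 5.2 we have $\dim S_2^1(\Delta_{MS})=6+\tau$, where $\tau$ is the dimension of the solution space of (5.2) in the unknowns $(\lambda_1,\dots,\lambda_6,\lambda_u,\lambda_v,\lambda_w)$. Hence it suffices to show that $\tau=1$ exactly when the six points $(\alpha_i,\beta_i,\gamma_i)$ lie on a conic and $\tau=0$ otherwise.

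First I would exploit the incidence geometry of $\Delta_{MS}$. The three interior vertices are the pairwise intersections $\langle u,v\rangle,\langle v,w\rangle,\langle w,u\rangle$ of the inner edges, and the grouping in (5.2) records that $l_1,l_2$ pass through $\langle u,v\rangle$, $l_3,l_4$ through $\langle v,w\rangle$, and $l_5,l_6$ through $\langle w,u\rangle$. Since a linear form vanishing at $\langle u,v\rangle$ lies in $\operatorname{span}(u,v)$, I may write $l_1=s_1u+t_1v$, $l_2=s_2u+t_2v$, $l_3=s_3v+t_3w$, $l_4=s_4v+t_4w$, $l_5=s_5w+t_5u$, $l_6=s_6w+t_6u$. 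Substituting these into (5.2) and expanding each identity in the bases $\{u^2,uv,v^2\}$, $\{v^2,vw,w^2\}$, $\{w^2,wu,u^2\}$, the coefficients of the cross terms $uv,vw,wu$ yield
\[
s_1t_1\lambda_1+s_2t_2\lambda_2=0,\quad s_3t_3\lambda_3+s_4t_4\lambda_4=0,\quad s_5t_5\lambda_5+s_6t_6\lambda_6=0,
\]
while the pure–square coefficients first recover $\lambda_u,\lambda_v,\lambda_w$ uniquely and then leave three consistency conditions
\[
s_1^2\lambda_1+s_2^2\lambda_2+t_5^2\lambda_5+t_6^2\lambda_6=0,\quad t_1^2\lambda_1+t_2^2\lambda_2+s_3^2\lambda_3+s_4^2\lambda_4=0,\quad t_3^2\lambda_3+t_4^2\lambda_4+s_5^2\lambda_5+s_6^2\lambda_6=0.
\]
Because $\lambda_u,\lambda_v,\lambda_w$ are determined by $\lambda_1,\dots,\lambda_6$, the number $\tau$ equals $\dim\ker N$, where $N$ is the $6\times 6$ matrix of these six equations.

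Next I would write out the condition that the six coefficient vectors $(\alpha_i,\beta_i,\gamma_i)$ lie on a common conic $Q$. Evaluating $Q(l_i)=0$ with $l_i$ expressed in the $s_i,t_i$ coordinates produces a $6\times6$ homogeneous system $M$ in the six coefficients of $Q$ (the values $Q(u),Q(v),Q(w)$ and the three mixed terms), so the six points lie on a conic iff $\det M=0$. The crux is then a direct entry comparison: up to a permutation of its rows and a rescaling of three of them by $2$, the matrix $N$ is exactly $M^{\mathsf T}$. Consequently $\det N=0\iff\det M=0$, and more precisely $\tau=\dim\ker N=\dim\ker M$. Thus $\tau\ge 1$ precisely when the six points lie on a conic, and $\tau=0$ otherwise; since in the configurations at hand the conic through the six points is unique, $\tau\in\{0,1\}$, giving $\dim S_2^1(\Delta_{MS})=7$ in the conic case and $6$ otherwise.

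The hard part will be the bookkeeping in the reduction—keeping straight the signs in (5.2), the factors of $2$ in the cross terms, and which pure–square equation defines which of $\lambda_u,\lambda_v,\lambda_w$—so that the structural identity $N\leftrightarrow M^{\mathsf T}$ becomes \emph{visible}; once that transpose relationship is spotted, the equivalence of the two determinant conditions is immediate. A secondary point requiring care is justifying $\tau\le 1$ (uniqueness of the conic, equivalently $\operatorname{rank}M\ge 5$) so that the singular dimension is exactly $7$ rather than larger.
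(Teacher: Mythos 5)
Your proposal is correct, but note that the paper itself offers no proof of this statement: Theorem 5.4 is quoted from \cite{Du03}, and the only argument of this kind written out in the paper is for the degree-$3$ analogue (Theorem 5.8). Measured against that model, your route differs in execution while turning on the same transpose/duality observation. The paper first shows the conformality system is isomorphic to the solution space of the single equation $\sum_i\lambda_i l_i^{k}=0$ (recovering $\lambda_u,\lambda_v,\lambda_w$ by restricting to $u=0$, $v=0$, $w=0$ and summing), then expands that equation in the monomials of $x,y,z$, so the coefficient matrix is literally the transpose of the Veronese matrix of the points $(\alpha_i,\beta_i,\gamma_i)$ and the conic (resp.\ cubic) condition is read off at once. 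You instead eliminate $\lambda_u,\lambda_v,\lambda_w$ vertex by vertex by expanding each identity of (5.2) in the adapted bases $\{u^2,uv,v^2\}$, etc., and then identify the reduced matrix $N$ with $M^{\mathsf T}$ after passing to the projective frame $(u),(v),(w)$. Both work: the paper's version avoids the change of frame and the entry-by-entry bookkeeping, while yours is more elementary and makes the split into three cross-term and three consistency equations explicit. I checked your reduction: the six equations are exactly as you list them, and $N$ is a row permutation (and harmless scaling by $2$ coming from $(su+tv)^2=s^2u^2+2st\,uv+t^2v^2$) of $M^{\mathsf T}$, so $\tau=\dim\ker N=\dim\ker M$. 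The one point you must not leave implicit is $\operatorname{rank}M\ge 5$: since $p_1,p_2$, $p_3,p_4$, $p_5,p_6$ lie pairwise on the three sides of the triangle with vertices $(u),(v),(w)$ and the edges of $\Delta_{MS}$ are distinct, no four of the six points are collinear, so five of them already determine a unique conic and $\tau\le 1$. You flagged this; with that supplied the argument is complete.
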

Using the principle of duality, an interesting fact is that the
equivalent relations in Theorem 5.3 and Theorem 5.4 hold because of
the Pascal theorem!

More Precisely, for the Morgan-Scott triangulation, let
\begin{eqnarray}
\left\{%
\begin{array}{ll}
    l_1=a_1u+b_1v \\
    l_2=a_2u+b_2v \\
\end{array}%
\right. ,
  \left\{%
\begin{array}{ll}
    l_3=a_3v+b_3w \\
    l_4=a_4v+b_4w \\
\end{array}%
\right.  and
 \left\{%
\begin{array}{ll}
    l_5=a_5w+b_5u \\
    l_6=a_6w+b_6u, \\
\end{array}%
\right.
\end{eqnarray}
where all $a_i's$ and $b_i's$ are constants, then by solving the
system of equations in (5.2), we have
\begin{theorem}[{\cite{Luo01}},{\cite{shixq91}}]
The spline space $S_2^1(\Delta_{MS})$ is singular (i.e. $\dim
S_2^1(\Delta_{MS})=7$) if and only if
\begin{eqnarray}
\frac{ b_1b_2}{a_1a_2}\cdot \frac{b_3b_4}{a_3a_4}\cdot
\frac{b_5b_6}{a_5a_6}=1.
\end{eqnarray}
\end{theorem}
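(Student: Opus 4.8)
The plan is to turn the three polynomial identities of the conformality system (5.2) into a homogeneous linear system in the nine unknowns $\lambda_1,\dots,\lambda_6,\lambda_u,\lambda_v,\lambda_w$, and to show that this system has a nontrivial solution exactly when (5.5) holds. Since $\dim S_2^1(\Delta_{MS})=6+\tau$, where $\tau$ is the dimension of the solution space of (5.2), singularity ($\dim=7$, i.e.\ $\tau=1$) is equivalent to the existence of a nonzero solution, hence to the vanishing of an appropriate determinant. So the whole proof is a (carefully organized) elimination and determinant computation.

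First I would fix $u,v,w$ as a basis for the space of linear forms; this is legitimate because the three inner-triangle edges have no common zero, so the corresponding forms are linearly independent (one may simply take $u=x$, $v=y$, $w=z$). Substituting (5.4) into each equation of (5.2) and expanding in the basis $\{u^2,v^2,w^2,uv,vw,wu\}$ of degree-two forms, each identity contributes exactly three nontrivial scalar equations, the coefficients of the three absent monomials vanishing automatically. For the first identity these read
\[
\lambda_1 a_1^2+\lambda_2 a_2^2+\lambda_u=0,\quad
\lambda_1 b_1^2+\lambda_2 b_2^2+\lambda_v=0,\quad
\lambda_1 a_1 b_1+\lambda_2 a_2 b_2=0,
\]
and cyclically for the other two, producing nine equations in the nine $\lambda$'s.

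Next I would eliminate. The three ``square'' equations solve for $\lambda_u,\lambda_v,\lambda_w$; since each of these appears in two of the identities, their elimination yields three compatibility relations among $\lambda_1,\dots,\lambda_6$. Each ``cross-term'' equation $\lambda_{2i-1}a_{2i-1}b_{2i-1}+\lambda_{2i}a_{2i}b_{2i}=0$ forces $(\lambda_{2i-1},\lambda_{2i})$ to be proportional to $(a_{2i}b_{2i},-a_{2i-1}b_{2i-1})$, so I can introduce a single parameter $t_i$ per inner vertex. After substitution the system collapses to a homogeneous $3\times 3$ system $M(t_1,t_2,t_3)^{\top}=0$ whose off-diagonal entries are built from the products $a_ia_j$, $b_ib_j$ and the $2\times 2$ minors $D_1=a_1b_2-a_2b_1$, $D_2=a_3b_4-a_4b_3$, $D_3=a_5b_6-a_6b_5$. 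One must check that this reduction is a linear isomorphism of solution spaces, so that $\tau=\dim\ker M$.

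The decisive step, and the main obstacle, is the sign-careful expansion of $\det M$, which I expect to factor as
\[
\det M=D_1D_2D_3\bigl(a_1a_2a_3a_4a_5a_6-b_1b_2b_3b_4b_5b_6\bigr).
\]
Because the six edges $l_1,\dots,l_6$ are genuinely distinct from the inner-triangle edges meeting them, we have $a_i,b_i\neq 0$ and $D_1D_2D_3\neq 0$; hence $\det M=0$ is equivalent to $a_1\cdots a_6=b_1\cdots b_6$, that is, to (5.5). To pin down that the singular dimension is exactly $7$, I would note that when (5.5) holds a suitable $2\times 2$ minor of $M$ (e.g.\ the upper-left one, equal to $b_1b_2b_3b_4D_1D_2^2$) is nonzero, so $\mathrm{rank}\,M=2$ and $\tau=1$. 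Once the factored form of $\det M$ is in hand, the equivalence with (5.5) and the dimension count are immediate; the real work is the bookkeeping in the elimination and keeping the signs straight through the expansion.
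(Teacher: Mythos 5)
Your proof is correct and follows exactly the route the paper indicates: the paper itself gives no argument beyond ``by solving the system of equations in (5.2)'' with the decomposition (5.4), deferring to \cite{Luo01} and \cite{shixq91}, and your elimination carries that computation out in full, arriving at the $3\times 3$ system whose determinant factors as $D_1D_2D_3\,(a_1a_2a_3a_4a_5a_6-b_1b_2b_3b_4b_5b_6)$ as claimed. The only slip is the stated value of the upper-left $2\times 2$ minor, which is $a_1a_2a_3a_4D_1D_2$ rather than $b_1b_2b_3b_4D_1D_2^2$; this does not affect the rank argument, since any such diagonal minor is nonzero once $a_i,b_i\neq 0$ and $D_i\neq 0$.
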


\begin{figure}[h]
\begin{minipage}{0.45\textwidth}
\centering
  \includegraphics[width=0.8\textwidth]{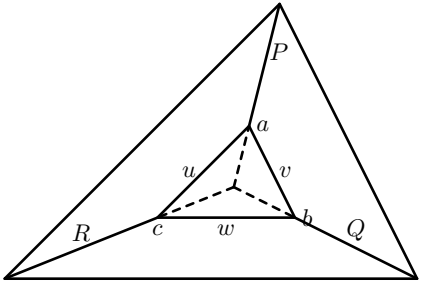}\\
\caption{Partition $\Delta$}
\end{minipage}
\begin{minipage}{0.5\textwidth}
\begin{remark}
In fact, there also exists the singularity in the simplest spline
space $S_1^0(\Delta)$ consisting of continuous piecewise linear
polynomials over arbitrary partition $\Delta$. For instance, let
$\Delta$ be a partition shown in Fig. 7, the dual figure of $\Delta$
is in Fig. 3. Using the same notations in Proposition 3.1, it is
easy to verify through the duality principle that $\dim
S_1^0(\Delta)=4$ when $\frac{b_1}{a_1}\cdot \frac{b_2}{a_2}\cdot
\frac{b_3}{a_3}=-1$, otherwise $\dim S_1^0(\Delta)=3.$
\end{remark}
\end{minipage}
\end{figure}
In general, for $\mu \geq 3$, Luo \& Chen\cite{Luo_Chen05} gave an
equivalent condition in an algebraic form to the singularity of
$S_{\mu+1}^{\mu}(\Delta_{MS}^{\mu}) (\mu \geq 3)$ as follows: for a
given triangulation $\Delta_{MS}^{\mu}$(see Fig. 9), suppose
\begin{eqnarray}
\left\{
\begin{array}{ll}
   l_{i}=a_{i}u+b_{i}v,\qquad i=1,2,\ldots\ldots\mu+1,\\
   l_{j}=a_{j}v+b_{j}w,\qquad j=\mu+2,\mu+3,\ldots\ldots2\mu+2,\\
   l_{k}=a_{k}w+b_{k}u,\qquad k=2\mu+3,2\mu+4,\ldots\ldots3\mu+3, \\
\end{array}
\right.
\end{eqnarray}
then
\begin{theorem}[{\cite{Luo_Chen05}}]
The spline space $S_{\mu+1}^{\mu}(\Delta_{MS}^{\mu})$ is singular if
and only if
\begin{equation}
\frac{a_1\ldots\ldots a_{\mu+1}}{b_1\ldots\ldots
b_{\mu+1}}\cdot\frac{a_{\mu+2}\ldots\ldots
a_{2\mu+2}}{b_{\mu+2}\ldots\ldots
b_{2\mu+2}}\cdot\frac{a_{2\mu+3}\ldots\ldots
a_{3\mu+3}}{b_{2\mu+3}\ldots\ldots b_{3\mu+3}}={(-1)}^{\mu+1}.
\end{equation}
\end{theorem}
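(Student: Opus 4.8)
The plan is to work directly from the Smoothing Cofactor formulation (Theorem 5.2), exactly as was done for the case $\mu=1$ that leads to Theorem 5.5. For the space $S_{\mu+1}^{\mu}(\Delta_{MS}^{\mu})$ the degree is $k=\mu+1$ and the smoothness is $\mu$, so every smoothing cofactor lies in $\mathbb{P}_{k-(\mu+1)}=\mathbb{P}_0$, i.e. is a constant. The triangulation $\Delta_{MS}^{\mu}$ has three interior vertices, namely the vertices $\langle u,v\rangle,\langle v,w\rangle,\langle w,u\rangle$ of the inner triangle, and through each vertex pass the $\mu+1$ spoke edges together with the two inner edges meeting there. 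Writing the unknown cofactors as $\lambda_1,\dots,\lambda_{3\mu+3}$ for the spokes and $\lambda_u,\lambda_v,\lambda_w$ for the inner edges, the conformality condition (5.1) at $\langle u,v\rangle$ becomes the single polynomial identity generalizing (5.2),
\begin{equation*}
\sum_{i=1}^{\mu+1}\lambda_i l_i^{\mu+1}+\lambda_u u^{\mu+1}+\lambda_v v^{\mu+1}=0,
\end{equation*}
with the two cyclically analogous identities (carrying the orientation signs $\pm\lambda_u,\pm\lambda_v,\pm\lambda_w$ of (5.2)) holding at the other two vertices. The space is singular precisely when this homogeneous system in the $3\mu+6$ cofactors has a nontrivial solution, so the whole problem reduces to deciding when that happens.

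The key observation is that every form in the first identity ($l_1,\dots,l_{\mu+1}$ together with $u$ and $v$) is a linear combination of the two forms $u,v$, so all of their $(\mu+1)$-st powers lie in $\mathrm{Sym}^{\mu+1}\langle u,v\rangle$, a space of dimension $\mu+2$. I would invoke the classical fact that the $(\mu+1)$-st powers of pairwise non-proportional linear forms in two variables are linearly independent as long as their number does not exceed $\mu+2$ (equivalently, distinct points of the degree-$(\mu+1)$ rational normal curve are in linearly general position). Since the first identity involves $\mu+3=(\mu+2)+1$ such forms, its space of relations is exactly one-dimensional, and by the same independence every coefficient in that relation is nonzero. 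Hence, up to an overall scalar $s_1$, the vector $(\lambda_1,\dots,\lambda_{\mu+1},\lambda_u,\lambda_v)$ is forced to equal the unique relation vector, and likewise at the other two vertices with scalars $s_2,s_3$; in particular each vertex pins down the ratio of its two inner-edge cofactors.

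To make these ratios explicit I would use the residue/divided-difference formula for the linear dependence among $d+2$ points on the degree-$d$ rational normal curve: with homogeneous parameters $(a_j,b_j)$ and $[jk]:=a_jb_k-a_kb_j$, the relation coefficients are proportional to $1/\prod_{k\neq j}[jk]$. Taking $u=(1,0)$ and $v=(0,1)$ gives $[u,l_i]=b_i$ and $[v,l_i]=-a_i$, so at $\langle u,v\rangle$ one reads off $\lambda_u\propto 1/\prod_{i=1}^{\mu+1}b_i$ and $\lambda_v\propto (-1)^{\mu}/\prod_{i=1}^{\mu+1}a_i$, and similarly at the other two vertices. Writing $A_1=\prod_{i=1}^{\mu+1}a_i$, $B_1=\prod_{i=1}^{\mu+1}b_i$, and $A_2,B_2,A_3,B_3$ for the corresponding products over the other two groups, the requirement that each shared cofactor $\lambda_u,\lambda_v,\lambda_w$ agree between the two vertices on whose edges it lies yields three linear relations in $s_1,s_2,s_3$. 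Eliminating the scalars around the cycle collapses these to the single compatibility condition $A_1A_2A_3=(-1)^{\mu+1}B_1B_2B_3$, which is exactly (5.7); conversely, when (5.7) holds one can choose a nonzero $(\lambda_u,\lambda_v,\lambda_w)$ and then recover all spoke cofactors, producing the extra dimension $\tau=1$, whereas generically the system forces $\tau=0$.

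The main obstacle is this final bookkeeping: correctly propagating the orientation signs of the conformality conditions through the explicit relation coefficients so that the cyclic elimination lands on the precise power $(-1)^{\mu+1}$ rather than $+1$ or $-1$. A secondary point needing care is the degenerate possibility that $\lambda_u=\lambda_v=\lambda_w=0$ in a nontrivial solution; but then each identity would force a dependence among only $\mu+1$ powers inside a $(\mu+2)$-dimensional space, which the independence lemma forbids, so every nontrivial solution has all three inner cofactors nonzero and the reduction to (5.7) is complete.
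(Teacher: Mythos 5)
The paper offers no proof of this theorem to compare against: it is imported verbatim from \cite{Luo_Chen05}, and the only methodological hint in the text is the remark that the $\mu=1$ case (Theorem 5.5) follows ``by solving the system of equations in (5.2)''. Your proposal executes exactly that programme for general $\mu$, and it is correct. The two classical inputs are applied properly: the $(\mu+1)$-st powers of up to $\mu+2$ pairwise non-proportional binary linear forms are linearly independent, so each local conformality identity (which involves $\mu+3$ such powers) has a one-dimensional relation space with all coefficients nonzero; and the divided-difference formula $c_j\propto\prod_{k\neq j}[jk]^{-1}$ identifies that relation explicitly. I checked the sign bookkeeping you flagged as the main risk: with the orientation pattern of (5.2) one gets $\lambda_u=s_1/B_1$, $\lambda_v=s_1(-1)^{\mu}/A_1$, $-\lambda_v=s_2/B_2$, $\lambda_w=s_2(-1)^{\mu}/A_2$, $-\lambda_w=s_3/B_3$, $-\lambda_u=s_3(-1)^{\mu}/A_3$, and eliminating $s_1,s_2,s_3$ yields $A_1A_2A_3=(-1)^{\mu+1}B_1B_2B_3$, which is precisely the displayed condition (5.6); for $\mu=1$ it reduces to (5.4), as it must. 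Your disposal of the degenerate case (vanishing inner cofactors force the trivial solution) closes the equivalence in both directions. The only point you assume rather than establish is that the global conformality system of $\Delta_{MS}^{\mu}$ consists exactly of the three vertex identities with constant cofactors and the stated sign pattern; this is consistent with (5.2), (5.5) and Fig.~9, but since the paper never writes the system down for general $\mu$, it should be recorded explicitly as part of the setup rather than taken for granted.
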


\begin{figure}[h]
\begin{minipage}[b]{0.45\textwidth}
\centering
  \includegraphics[width=0.8\textwidth]{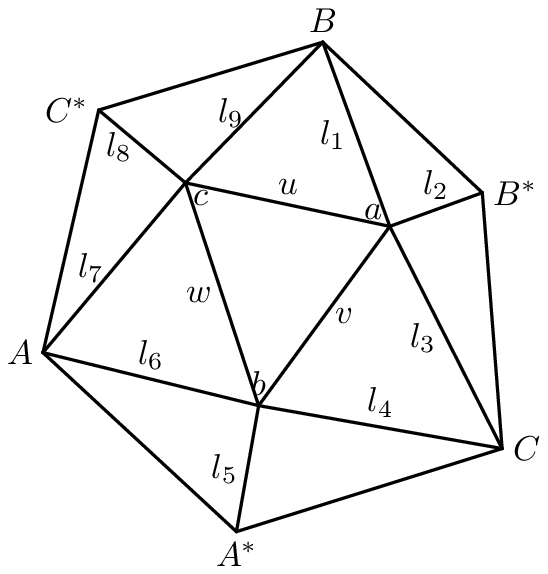}\\
\caption{Morgan-Scott's type triangulation $\Delta_{MS}^2$}
\end{minipage}
\begin{minipage}[b]{0.45\textwidth}
\centering
  \includegraphics[width=0.8\textwidth]{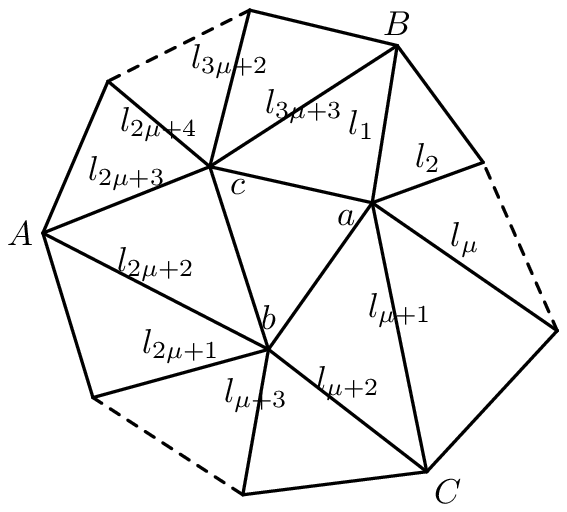}\\
\caption{Morgan-Scott's type triangulation $\Delta_{MS}^\mu$}
\end{minipage}
\end{figure}

For the geometric condition of the singularity of
$S_3^2(\Delta_{MS}^2)$, it was analyzed in {\cite{Luo_Wang05}} from
projective geometry point of view and the following result was
obtained.

Let $l_i: \alpha_ix+\beta_iy+\gamma_i z=0 (i=1,2,\ldots,9)$,
$a=(a_1,a_3,a_3), b=(b_1,b_2,b_3)$, and $c=(c_1,c_2,c_3)$ in
$\Delta_{MS}^2$ triangulation (see Fig.5.3). Let
$l_a=a_1x+a_2y+a_3z, l_b=b_1x+b_2y+b_3z$ and $l_c=c_1x+c_2y+c_3z$.
We define $\bar{\mathbb{P}}_3$ to be the cubic polynomial subspaces
spanned by any nine monomials of $\{x^3,y^3,z^3,x^2y,xy^2,$
$y^2z,yz^2,x^2z,xz^2,xyz\}$ as in \cite{Luo_Wang05}.
\begin{theorem}[{\cite{Luo_Wang05}}]
The spline space $S_3^2(\Delta_{MS}^2)$ is singular (i.e. $dim S_3^2
(\Delta_{MS}^2)=11$) if and only if
$p_i=(\alpha_i,\beta_i,\gamma_i),(i=1,2,\ldots,9)$ lie on a plane
curve, which differs from $l_a\cdot l_b\cdot l_c=0$, in
$\bar{\mathbb{P}}_3$.
\end{theorem}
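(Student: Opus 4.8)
The plan is to read Theorem 5.8 as the geometric, dual reformulation of the purely algebraic singularity criterion for $S_3^2(\Delta_{MS}^2)$, and to derive it by combining the Smoothing Cofactor analysis of the spline space (Theorem 5.2) with the principle of duality of Section 2 and Proposition 4.2. Concretely, I would establish the chain
\[
S_3^2(\Delta_{MS}^2)\ \text{singular} \iff (\star) \iff \{p_i\}_{i=1}^{9}\ \text{lie on a cubic} \ne l_al_bl_c ,
\]
where $(\star)$ is the algebraic condition of Theorem 5.7 specialized to $\mu=2$, and where the rightmost condition is exactly what Proposition 4.2 governs.

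First I would set up the conformality conditions for $S_3^2(\Delta_{MS}^2)$ via Theorem 5.2. Because $k-(\mu+1)=3-3=0$, every smoothing cofactor is a \emph{constant}, so the three interior-vertex conformality relations (the degree-$3$ analogue of (5.2), now built from the cubes $l_i^3$ together with $u^3,v^3,w^3$) form a homogeneous linear system in those constants. Solving this system exactly as one solves (5.2) to obtain (5.5) — equivalently, quoting Theorem 5.7 with $\mu=2$ — shows that the generic dimension $10=\binom{5}{2}$ jumps to $11$ precisely when
\[
\frac{a_1a_2a_3}{b_1b_2b_3}\cdot\frac{a_4a_5a_6}{b_4b_5b_6}\cdot\frac{a_7a_8a_9}{b_7b_8b_9}=-1 ,
\]
with $l_i=a_iu+b_iv$, $l_j=a_jv+b_jw$, $l_k=a_kw+b_ku$ as in (5.6). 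Taking reciprocals, this is precisely condition (4.2).

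Next I would invoke duality. By the two-dimensional principle of duality, each interior line $l_i=a_iu+b_iv$ corresponds to the point $p_i=(\alpha_i,\beta_i,\gamma_i)$ carrying the \emph{same} coordinates $a_i,b_i$ relative to the dual of the basis $u,v,w$, while the three inner vertices $a,b,c$ dualize to the lines $l_a,l_b,l_c$; thus the nine interior lines of the triangulation dualize to nine points lying three-by-three on $l_a,l_b,l_c$. Proposition 4.2 then applies directly: condition (4.2) holds if and only if these nine points lie on a cubic that differs from $l_a\cdot l_b\cdot l_c=0$. Normalizing the dual basis to the coordinate triangle turns $l_al_bl_c$ into $xyz$, matching the $a\cdot b\cdot c=xyz$ of the proof of Proposition 4.2; the confinement to the nine-dimensional $\bar{\mathbb{P}}_3$ then emerges by tracking which cubic monomials actually survive in the cofactor relations, one monomial direction being annihilated.

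The conceptually routine part is the duality step and the appeal to Proposition 4.2; the technical heart — and the main obstacle — is the Smoothing Cofactor computation in the first step: correctly fixing the cyclic signs of the constant cofactors at each interior vertex, verifying that the coefficient determinant of the resulting system drops rank by \emph{exactly} one (so the dimension rises from $10$ to $11$ and no further), and confirming that the extra cubic it produces genuinely lies in $\bar{\mathbb{P}}_3$ and is not proportional to $l_al_bl_c$. Once that rank-one degeneration is pinned down, the stated equivalence follows.
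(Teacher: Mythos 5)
Your proposed chain is circular within the logic of this paper. You derive Theorem 5.8 from Proposition 4.2, but Proposition 4.2 is itself obtained ``following the proof of Theorem 4.1,'' and the proof of Theorem 4.1 (Appendix 5.2) consists precisely of invoking Theorem 5.8 together with Theorem 5.7. So the equivalence ``(4.2) holds $\iff$ the nine points lie on a cubic differing from $l_a\cdot l_b\cdot l_c=0$'' is a \emph{consequence} of Theorem 5.8, not an available input to it. Unless you supply an independent proof of Proposition 4.2 (which is essentially the whole geometric content the spline machinery is being used to establish), the argument assumes what it is trying to prove. The first half of your chain --- singularity $\iff$ the algebraic condition of Theorem 5.7 with $\mu=2$ --- is legitimate since Theorem 5.7 is quoted from \cite{Luo_Chen05}, but it cannot be completed by the second half.

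The paper's actual proof avoids this by working directly with the conformality system: it shows that the projection $\psi$ from solutions of the three-vertex system (5.8) to solutions of the single global syzygy $\sum_{i=1}^{9}\lambda_i l_i^3=0$ is bijective (by restricting to $u=0$, $v=0$, $w=0$ and checking the auxiliary constants cancel), so that singularity is equivalent to a nonzero solution of that syzygy. Expanding in monomials turns this into the rank condition $\operatorname{rank}(\mathbb{M})=8$ for the $10\times 9$ matrix of cubic monomials at the dual points $p_i$; since the $p_i$ automatically lie on $C_3=l_a l_b l_c=0$ and no four are collinear (so $8\le\operatorname{rank}(\mathbb{M})\le 9$), the rank drops to $8$ exactly when the nine points lie on a second cubic, with a separate $z^3$-term argument handling the restriction to $\bar{\mathbb{P}}_3$. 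These are the steps your proposal waves at (``tracking which cubic monomials survive,'' ``verifying the determinant drops rank by exactly one'') without carrying them out, and they constitute the real technical content of the theorem.
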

\begin{proof}
Embedding $\Delta_{MS}^2$ to $\mathbb{P}^2$ by the map:
$(x,y)\longmapsto [x,y,1]$. Suppose the lines $\bar{bc}, \bar{ca}$
and $\bar{ab}$ are given by $u=0,v=0$ and $w=0$, respectively. There
are real numbers $a_i, b_i (i=1,2,\cdots,9)$ such that
\begin{eqnarray}
\left\{%
\begin{array}{ll}
    l_1=a_1u+b_1v \\
    l_2=a_2u+b_2v \\
    l_3=a_3u+b_3v
\end{array}%
\right. ,
  \left\{%
\begin{array}{ll}
    l_4=a_4v+b_4w \\
    l_5=a_5v+b_5w \\
    l_6=a_6v+b_6w
\end{array}%
\right.  and
 \left\{%
\begin{array}{ll}
    l_7=a_7w+b_7u \\
    l_8=a_8w+b_8u \\
    l_9=a_9w+b_9u.
\end{array}%
\right.
\end{eqnarray}
Let $\lambda_i(i=1,2,\ldots,9)$ be the corresponding smoothing
cofactors and let $p_i=(\alpha_i,\beta_i,\gamma_i),i=1,2,\ldots,9$.
Then the global conformality conditions in $S_3^2(\Delta_{MS}^2)$
become
\begin{eqnarray}
\left\{\begin{array}{ll}
\lambda_1l_1^3+\lambda_2l_2^3+\lambda_3l_3^3+\lambda_uu^3+\lambda_vv^3=0,\\
\lambda_4l_4^3+\lambda_5l_5^3+\lambda_6l_6^3-\lambda_vv^3+\lambda_ww^3=0,\\
\lambda_7l_7^3+\lambda_8l_8^3+\lambda_9l_9^3-\lambda_ww^3-\lambda_uu^3=0.
\end{array}
\right.
\end{eqnarray}
Let
$\psi:(\lambda_1,\lambda_2,\cdots,\lambda_9,\lambda_u,\lambda_v,\lambda_w)\longmapsto
(\lambda_1,\lambda_2,\cdots,\lambda_9)$, then $\psi$ is an injective
linear map from the solution spaces of (5.7) to the solution space
of
\begin{eqnarray}
\sum_{i=1}^9 \lambda_il_i^3(x,y,z)=0.
\end{eqnarray}
Similar to \cite{Du03}, we intend to prove that the map $\psi$ is
also bijective. For this purpose, let
$\Lambda=(\lambda_1,\lambda_2,\cdots,\lambda_9)$ be a solution of
(5.9). Taking $u=0$, $w=0$ and $v=0$ in (5.9) respectively, we have
\begin{eqnarray*}
\begin{array}{lll}
\lambda_4l_4^3+\lambda_5l_5^3+\lambda_6l_6^3+k_1w^3+k_2v^3=0,\\
\lambda_1l_1^3+\lambda_2l_2^3+\lambda_3l_3^3+k_3u^3+k_4v^3=0,\\
\end{array}
\end{eqnarray*}
and
\begin{eqnarray*}
\begin{array}{l}
\lambda_7l_7^3+\lambda_8l_8^3+\lambda_9l_9^3+k_5u^3+k_6w^3=0,
\end{array}
\end{eqnarray*}
where all $k_i (i=1,2,\cdots,6)$ are real numbers determined by
$\Lambda$ and the coefficients in (5.9). Since $\Lambda$ is a
solution of (5.9), it follows that
$$k_1w^3+k_2v^3+k_3u^3+k_4v^3+k_5u^3+k_6w^3=0,$$
and $k_1=-k_6, k_2=-k_4, k_3=-k_5$. Consequently,
$\tilde{\Lambda}:=(\lambda_1,\cdots,\lambda_9, k_3, k_2, k_1)$ is a
solution of (5.8).

Hence, $\dim S_3^2(\Delta_{MS}^2)=11$(or $S_3^2(\Delta_{MS}^2)$ is
singular) if and only if there exists a nonzero solution of equation
(5.9). Now expand (5.9) with respect to $x,y,z$, will result in a
system of linear equations:
\begin{eqnarray}
\hspace{0.8cm}\mathbb{M}\Lambda :=\left(
  \begin{array}{ccccc}
    \alpha_1^3 & \alpha_2^3 & \cdots & \alpha_8^3 & \alpha_9^3 \\
    \alpha_1^2\beta_1 & \alpha_2^2\beta_2 & \cdots & \alpha_8^2\beta_8 & \alpha_9^2\beta_9 \\
    \cdots & \cdots & \cdots & \cdots & \cdots \\
    \alpha_1\beta_1\gamma_1 & \alpha_2\beta_2\gamma_2 & \cdots & \alpha_8\beta_8\gamma_8 & \alpha_9\beta_9\gamma_9 \\
    \cdots & \cdots & \cdots & \cdots & \cdots \\
    \beta_1\gamma_1^2 & \beta_2\gamma_2^2 & \cdots & \beta_8\gamma_8^2 & \beta_9\gamma_9^2 \\
    \gamma_1^3 & \gamma_2^3 & \cdots & \gamma_8^3 & \gamma_9^3 \\
  \end{array}
\right)\cdot
\left(
  \begin{array}{c}
    \lambda_1 \\
    \lambda_2 \\
    \vdots \\
    \lambda_9 \\
  \end{array}
\right)=0
\end{eqnarray}
Notice that $p_i=(\alpha_i,\beta_i,\gamma_i)(i=1,2,\ldots,9)$ lie on
the cubic $C_3:=l_a\cdot l_b\cdot l_c=0$, obviously the row vectors
of the coefficient matrix are linearly dependent. Since no four
points in $\{p_i=(\alpha_i,\beta_i,\gamma_i)\}_{i=1}^9$ are
collinear, it can be shown from a classical results of algebraic
geometry that rank$(\mathbb{M})\geq 8$. Hence, (5.10) has a non-zero
solution $\Lambda$ if and only if the rank of the coefficient matrix
of (5.10) is equal to 8, implying that those nine points
$p_i=(\alpha_i,\beta_i,\gamma_i)(i=1,2,\ldots,9)$ lie on a cubic in
$\bar{\mathbb{P}}_3$. Conversely, let
$\{p_i=(\alpha_i,\beta_i,\gamma_i)\}_{i=1}^9$ lie on a cubic
$\Gamma_3$ in $\bar{\mathbb{P}}_3$, and $\Gamma_3$ differ from
$C_3$. Without loss of generality, suppose
$$\Gamma_3:\ \ a_1x^3+a_2x^2y+a_3xy^2+a_4y^3+a_5xyz+a_6x^2z+a_7xz^2+a_8y^2z+a_9yz^2=0$$
(no $z^3$ term),then we claim that $C_3$ must contain a $z^3$ term.
Otherwise, by simple computation, there exists constant $d$ such
that a cubic $\bar{\Gamma}_3=\Gamma_3+dC_3$, composed some 8 basis
elements in $\{x^3,y^3,z^3,x^2y,xy^2,y^2z,yz^2,x^2z,xz^2,xyz\}$,
passes through the nine points
$\{p_i=(\alpha_i,\beta_i,\gamma_i)\}_{i=1}^9$. Thus the rank of the
coefficient matrix $\mathbb{M}$ must be less than 8, which is
contradictory. Since $p_i (i=1,2,\cdots,9)$ lie on $\Gamma_3$,
\begin{eqnarray}
\left(
  \begin{array}{ccccc}
    \alpha_1^3 & \alpha_1^2\beta_1 & \cdots & \beta_1^2\gamma_1 & \beta_1\gamma_1^2 \\
    \alpha_2^3 & \alpha_2^2\beta_2 & \cdots & \beta_2^2\gamma_2 & \beta_2\gamma_2^2 \\
    \cdots &  & \cdots &  & \cdots \\
    \alpha_9^3 & \alpha_9^2\beta_9 & \cdots & \beta_9^2\gamma_9 & \beta_9\gamma_9^2 \\
  \end{array}
\right)\cdot\left(
              \begin{array}{c}
                a_1 \\
                a_2 \\
                \vdots \\
                a_9 \\
              \end{array}
            \right)=0.
\end{eqnarray}
Obviously, the system of linear equations:
\begin{eqnarray}
\left(
  \begin{array}{ccccc}
    \alpha_1^3 & \alpha_2^3 & \cdots & \alpha_8^3 & \alpha_9^3 \\
    \alpha_1^2\beta_1 & \alpha_2^2\beta_2 & \cdots & \alpha_8^2\beta_8 & \alpha_9^2\beta_9 \\
    \cdots & \cdots & \cdots & \cdots & \cdots \\
    \alpha_1\beta_1\gamma_1 & \alpha_2\beta_2\gamma_2 & \cdots & \alpha_8\beta_8\gamma_8 & \alpha_9\beta_9\gamma_9 \\
    \cdots & \cdots & \cdots & \cdots & \cdots \\
    \beta_1\gamma_1^2 & \beta_2\gamma_2^2 & \cdots & \beta_8\gamma_8^2 & \beta_9\gamma_9^2 \\
  \end{array}
\right)\cdot \left(
  \begin{array}{c}
    \lambda_1 \\
    \lambda_2 \\
    \vdots \\
    \lambda_9 \\
  \end{array}
\right)=0
\end{eqnarray}
has a non-zero solution. The condition of $C_3$ containing a term
$z^3$ and passing through $p_i (i=1,2,\cdots, 9)$ show that the
vector $(\gamma_1^3,\gamma_2^3,\cdots,\gamma_9^3)$ can be expressed
by the linear combination of the 9 row vectors in (5.12). Therefore,
the non-zero solution $\Lambda$ of (5.12) is also solution of (5.9)
and (5.8). This completes the proof.
\end{proof}
\begin{remark}
In fact, it can be easily seen from the process of the proof of
Theorem 5.8 or from the Chasles's Theorem that Theorem 5.8 can be
improved as:{\em The spline space $S_3^2(\Delta_{MS}^2)$ is singular
(i.e. $dim S_3^2 (\Delta_{MS}^2)=11$) if and only if
$p_i=(\alpha_i,\beta_i,\gamma_i),(i=1,2,\ldots,9)$ lie on a cubic,
which differs from $l_a\cdot l_b\cdot l_c=0$.}
\end{remark}

\subsection{Proof of Theorem 4.1}
Let $a,b$ and $c$ be any three distinct lines in the projective
plane $\mathbb{P}^2$, denoted by $u=<c,a>,v=<a,b>$ and $w=<b,c>$,
and $\Gamma_3$ be a cubic in $\mathbb{P}^2$. Assume that
$p_1,p_2,p_3$ are three intersection points of $a$ and $\Gamma_3$,
$p_4,p_5,p_6$ are intersection points of $b$ and $\Gamma_3$, and
$p_7,p_8,p_9$ are intersection points of $c$ and $\Gamma_3$. Then
there are real numbers $\{a_i,b_i\}$ such that
\begin{eqnarray*}
\left\{%
\begin{array}{ll}
    p_1=a_1u+b_1v \\
    p_2=a_2u+b_2v \\
    p_3=a_3u+b_3v
\end{array}%
\right. ,
  \left\{%
\begin{array}{ll}
    p_4=a_4v+b_4w \\
    p_5=a_5v+b_5w \\
    p_6=a_6v+b_6w
\end{array}%
\right.  and
 \left\{%
\begin{array}{ll}
    p_7=a_7w+b_7u \\
    p_8=a_8w+b_8u \\
    p_9=a_9w+b_9u.
\end{array}%
\right.
\end{eqnarray*}
Using Definition 1.3, the duality of the figure composed by the
lines $a,b$ and $c$, the points $u,v$, $w$ and $\{p_i\}_{i=1}^9$
turns precisely out the Morgan-Scott type partition $\Delta_{MS}^2$
(in which $\mu=2$) as shown in Fig.5.3, where
$l_i=\alpha_ix+\beta_iy+\gamma_iz, i=1,2,\cdots,9$. From Theorem
5.8, we see that the spline space $S_3^2(\Delta_{MS}^2)$ is
singular, that is, $\dim S_3^2(\Delta_{MS}^2)=11$. Consequently, it
follows from Theorem 5.7 ($\mu=2$) that the characteristic number of
a cubic is always equal to $(-1)^3=-1$. Which completes the proof of
our main result.

\ \ \\
\textbf{Acknowledgement.} The author would like to appreciate Prof.
Prof. T.Y. Li of Michigan State University for his valuable comments
and suggestions, that helped to improve the paper. The author also
thanks Prof. R.H. Wang and J.Z. Nan of Dalian University of
Technology for their kind suggestions from the aspect of
computational geometry and projective geometry.


\begin{thebibliography}{99}

\bibitem{Alf85} P. Alfeld, On the dimension of multivariate piecewise polynomials, in
Numerical Analysis, D. F. Griffiths and G. A. Watson (eds.), Longman
Scientific and Technical, 1986, 1-23.
\bibitem{Alf87} P. Alfeld, A Case Study of Multivariate
Piecewise Polynomials, in Geometric Modeling, G. Farin (ed.), SIAM
publication, 1987, 149-160.
\bibitem{Alf_Sch87} P. Alfeld and L. L. Schumaker, The dimension of bivariate spline spaces of smoothness $r$ for degree
$d\geq 4r+1$,  Const.  Approx.  3 (1987), 189-197.
\bibitem{Alf_P_Sch87} P. Alfeld,  B. Piper and L. L. Schumaker,  An cxplicit basic for C quartic bivariate splines,
SIAM J.  Numer.  Anal.  24 (1987),  891-911.
\bibitem{Alf_P_Sch871} P. Alfeld,  B. Piper and L. L. Schumaker,  Minimally supported bases for spaces of bivariate piecewise
polynomials of smoothness r and degree $d\geq 4r+1$,  Comput.  Aided
Geom.  Des.  4 (1987),  105-123.
\bibitem{GAC85}E. Arabello, M. Cornalba, P.A. Griffiths, and J. Harris, Geometry of Algebraic
Curves, vol. 1, Springer Verlag, 1985.
\bibitem{Bil88} L. J. Billera, Homology of Smooth Splines: Generic Triangulations and
 A Conjecture of Strang,  Transactions of The American Mathematical Society,  Vol. 310,  No. 1 (1988), 325-340.
\bibitem{C_Wang83} C. K. Chui and R. H. Wang, On smooth multivariate spline functions, Math. Comp. 41 (1983),
131-142.
\bibitem{C_Wang831} C. K. Chui and R. H. Wang, Multivariate spline spaces, J. Math. Anal. Appl. 94 (1983),
197-221.
\bibitem{C_S_W85} Chou Y. S., Su L. Y. and Wang R. H., The dimension of
bivariate spline spaces over Triangulations,  Tntl. Ser. Numer.
Math. 75,  Birkh\"{a}user, Basel, 71-83, 1985.
\bibitem{Coxeter} H.S.M. Coxeter, Projective Geomatry, second
edition, Springer-Verlag New York Inc. 1987.
\bibitem{Dah_Mic89}W. Dahmen \& C. A. Micchelli, On Multivariate E-Spline,
Advance in Mathematics, 76 (1989), 33-93.
\bibitem{Dah_Mic96}W. Dahmen, A. Dress and C. A. Micchelli, On Multivariate Spline, Matroid,
and the Ext-Functor, Advance in Applied Mathematics 17, 251-307,
1996.
\bibitem{Dien90} Diener, D., Instability in the dimension of spaces of
bivariate piecewise polynomials of degree $2r$ and smoothness order
$r$, SIAM J.  Numer. Anal., Vol. 2, No. 3, 543-551, 1990.
\bibitem{Du03}
Hong Du, XiQuan Shi and Renhong Wang, A geometric appraoch to $\dim
S_{2}^{1}(\Delta_{Ms})$, J. Systems Sci \& Comp., 15 (2)(2002):
202-204.
\bibitem{EGH96} David EISENBUD, Mark GREEN and Joe HARRIS,
CAYLEY-BACHARACH THEOREMS AND CONJECTRUES, BLLETIN(New series) of
the American Mathematical Society, Vol.33, No.3, 295-323, 1996.
\bibitem{E-J}Emily Riehl, E. Graham Evans Jr., On the interpolations of polynomials
and the Cayley-Bacharach theorem, J. of Pure and Applied Algebra,
2003,183:293-298.
\bibitem{Fulton89}W. Fulton, Algebraic Curves,
Addison-Wesley, 1989.
\bibitem{G-J}Leah Gold, John Little, Hal
Schenck, Cayley-Bacharach and evaluation codes on complete inter-
sections, J. of Pure and Applied Algebra, 2005,
196:91-99.
\bibitem{KG2006}Gabriel Katz, Curves in Cages: An
Algebro-Geometric Zoo, American Mathematical Monthly, 2006; 113 (9),
777-791.
\bibitem{Kir92}M. Kirwan, Complex Algebraic Curves, Cambridge University Press, Cambridge, 1992.
\bibitem{Luo01}Zhongxuan Luo,  Generator bases of modules in  and their Application,
          Acta Mathematica Sinica,  44(6), 983-994, 2001.
\bibitem{Luo_Wang05} Luo Zhongxuan, Wang Renhong,  Structure and Dimension of
Multivariate Spline Space on Arbitrary Triangulation, Journal of
Computational and Applied Mathematics, Vol. 195, Issues 1-2 , 2006,
113-133.
\bibitem{Luo_Chen05} Z. X.
Luo and L. J. Chen, The singularity of
$S_{\mu+1}^{\mu}(\Delta_{MS}^{\mu})$, J. Information and
Computational Science, Vol. 2 No. 4, 739-746, 2005.
\bibitem{Luo_Chen_Liu07} Zhongxuan Luo, Lijuan Chen and Yu Liu, Intrinsic
Properties and Invariant of Planar Algebraic Curves In The
Projection Plane, Computational Geometry-Theory and Applications,
2007(under review).
\bibitem{Mei2000} X. M. Mei, Higher Geometry(in Chinese), Higher Education
Press, Beijing, 2000.
\bibitem{M-S75} J. Morgan and R. Scott, The dimension of the space of
$C^1$ piecewise polynomials, unpublished manuscript, 1975.
\bibitem{Sch79}L. L.  Schumaker,  On the dimension of the space of piecewise polynomials in two
   variables,  in Multivariate Approximation Theory (W. Schemp and K. Zeller,  eds. ),
   Birkhauser Verlag,  Basel,  1979,  PP.  396-412.
\bibitem{Sch84} L. L. Schumaker, Bounds on the dimension of spaces of multivariate
piecewise polynomials,  Rocky Mountain J. Math. 14 (1984), 251-264.
\bibitem{S-R}Semple J G, Roth L., Introduction to Algebraic Geometry, Oxford at
the Clarendon Press, 1949
\bibitem{shixq91} Shi X. Q., The Singularity of Morgan-Scott Triangulation,
CAGD, 8 (1991),  201-206.
\bibitem{shixq92} Xiquan Shi, The dimensions of spline
spaces and their singularity, J. Comp. Math., 10, 1992, 224-230.
\bibitem{shixq03} Shi X. Q. \& Wang R. H.,  The Generation of Pascal's Theorem and Morgan-Scott's
Partition, Computational Geometry,  Lectures at Morningside Center
of Mathematics, R. H. Wang (ed.), AMS/IP Studies in Advanced
Mathematics, International Press, 179-187, 2003.
\bibitem{Strong74} G. Strang, The dimension of piecewise
polynomial spaces and one-sided approximation, Proc. Conf. Numerical
Solution of Differential Equations (Dundee 1973), Lecture Notes in
Math., Vol. 363, Springer-Verlag, New York, 1974, pp. 144-152.
\bibitem{Walker50} R. Walker, Algebraic Curves, Princeton: Princeton University Press,
1950.
\bibitem{R_H_Lu88} Wang R. H. \& Lu X. G., The dimension of bivariate
spline spaces over triangulations,  Science in China (A),  No. 1 ,
585-594, 1988.
\bibitem{W_Sh_L_S94} Wang R. H., X. Q. Shi, Z. X. Luo,
Z. X. Su, Multivariate Spline and its Applications, Kluwer Press,
2001, Academic Press, Beijing, 1994 (in Chinese).
\bibitem{wang00}R. H. Wang,  Multivariate Spline and Algebraic Geometry,
Journal of Computational Applied Mathematics,  Vol.121 (2000),
153-163.
\end{thebibliography}
\end{document}